\newtheorem{thm}{Theorem}[section]
\newtheorem{cor}[thm]{Corollary}
\newtheorem{lem}[thm]{Lemma}
\newtheorem{prop}[thm]{Proposition}
\theoremstyle{definition}
\newtheorem{defn}[thm]{Definition}
\theoremstyle{remark}
\newtheorem{rem}[thm]{Remark}
\numberwithin{equation}{section}
\theoremstyle{remark}
\newcommand{\Comp}[2]{\Comp^{#1}_{#2}}
\renewcommand{\phi}{\varphi}
\newcommand\mathand{\,\&\,}
\DeclareMathOperator{\Ncl}{\mathtt{Ncl}}
\DeclareMathOperator{\Cj}{\mathtt{Cj}}
\newcommand{\rel}[1]{\mathrel{#1}}
\newcommand{\C}[2]{C^{#1}_{#2}}
\newcommand \la \langle
\newcommand \ra \rangle
\newcommand{\sss}{\sigma}
\newcommand{\ttt}{\tau}
\newcommand{\aaa}{\alpha}
\begin{document}

\title{Calibrating word problems of groups via  the complexity of
equivalence relations}

\author[Nies]{Andr\'e Nies}
\address{Department of Computer Science\\
University of Auckland\\
Private Bag
92019, Auckland, New Zealand}
\email{andre@cs.auckland.ac.nz}
\author[Sorbi]{Andrea Sorbi}
\address{Dipartimento di Ingegneria dell'Informazione e Scienze Matematiche\\
Universit\`a di Siena\\
53100 Siena\\
Italy}
\email{andrea.sorbi@unisi.it}
\thanks{
Nies is partially supported by the Marsden fund of New Zealand. Sorbi is a
member of INDAM-GNSAGA; he was partially supported by Grant 3952/GF4 of the
Science Committee of the Republic of Kazakhstan, and by PRIN 2012 ``Logica
Modelli e Insiemi ''} \keywords{Word problem of groups, equivalence
relations, computable reducibility} \subjclass{03D45}
\date{}%

\begin{abstract}
(1) There is  a finitely presented group with a  word problem which is a
uniformly effectively inseparable equivalence relation. (2) There is  a
finitely generated  group of computable permutations with a word problem
which is  a universal co-computably enumerable equivalence relation. (3)
Each c.e.\  truth-table degree contains the word problem of a finitely
generated group of computable permutations.

\end{abstract}

\maketitle

\section{Introduction}
Given two equivalence relations $R, S$ on the set $\omega$ of natural
numbers, we say that $R$ is \emph{computably reducible to $S$}  (or, simply,
$R$ is \emph{reducible to $S$}; notation: $R \leq S$) if there exists a
computable function $f$ such that, for every $x,y \in \omega$,
\[
x \rel{R} y \Leftrightarrow f(x) \rel{S} f(y).
\]
The first systematic study of this reducibility on equivalence relations is
implicit in  Ershov~\cite{Ershov:NumberingsI, Ershov:NumberingsII}. Recently
this reducibility has been successfully applied to classify natural problems
arising in mathematics and computability theory: see for instance
in~\cite{Coskey-Hamkins-Miller, Fokina-Friedman-Nies, Fokina-et-al-several}.

In classifying objects according to their relative complexity, an important
role is played by objects that are universal, or complete, with respect to
some given class. We are interested in this notion for the case of
equivalence relations on $\omega$.

\begin{defn}
Let $\mathcal{A}$ be a class of equivalence relations. An equivalence
relation $R\in \mathcal A$  is called \emph{$\mathcal{A}$-universal}, (also
sometimes called $\mathcal{A}$-complete) if  $S \leq R$ for every $S \in
\mathcal{A}$.
\end{defn}
For instance, by~Fokina  et al.~\cite{Fokina-et-al-several}  the isomorphism
relation for various familiar classes of computable structures is
$\Sigma^1_1$-universal, and by~Fokina,  Friedman and Nies
\cite{Fokina-Friedman-Nies} the relation of computable isomorphism of c.e.\
sets is $\Sigma^0_3$-universal. Ianovski et al.\
\cite[Theorem~3.5]{Ianovski-et-al} provide a natural example  of a
$\Pi^0_1$-universal equivalence relation, namely equality of unary quadratic
time computable functions. In contrast, they show
\cite[Corollary~3.8]{Ianovski-et-al} that there is no $\Pi^0_n$-universal
equivalence relation for $n>1$.

In this paper we are interested in $\Sigma^{0}_{1}$-universal and in
$\Pi^{0}_{1}$-universal equivalence relations arising from group theory. They
arise naturally via word problems, if we view the word problem of a group as
the  equivalence relation that holds for two terms if they denote the same
group element.

In  Theorem~\ref{thm:main1}  we will build  a finitely presented group  with
a word problem as follows: each pair of distinct equivalence classes is  effectively
inseparable in a uniform way. Since this property for ceers implies
 $\Sigma^0_1$-universality (see \cite{ceers}), it follows that the word problem is
$\Sigma^0_1$-universal.

Finitely generated (f.g.) groups of computable permutations are special cases
of f.g. groups with a co-c.e. set of relators. The word problem of any
finitely generated (f.g.) group of computable permutations is $\Pi^0_1$.
Using the  theory of numberings, Morozov~\cite{morozov2000once} built an
example of a  f.g.\ group with $\Pi^0_1$ word problem  that  is not
isomorphic to  a f.g.\  group of computable permutations. (We conjecture that
future research might provide a natural example of such a group, generated
for instance by finitely many computable isometries of the Urysohn space.) As
our second main result, in Theorem~\ref{thm:main2} we will   build   a f.g.\
group of computable permutations  with a $\Pi^{0}_{1}$-universal word
problem. Thus, within the groups that have a $\Pi^0_1$ word problem, the
maximum complexity of the word problem is already assumed  within the
restricted class of f.g.\ groups of computable permutations. By varying the
methods, in Theorem~\ref{th:truth table} we show that every c.e.\ truth-table
degree contains the word problem of a 3-generated group of computable
permutations.

We include a number of open questions. Is the computably enumerable equivalence
relation of isomorphism among
finitely presented groups recursively isomorphic to equivalence of
sentences under Peano arithmetic?  What is the complexity of embedding and
isomorphism among f.g.\ groups of (primitive) recursive permutations? A
natural guess would be $\Sigma^0_3$ -universality.

\section{Background and preliminaries}
\subsection*{Group theory}
Group theoretic terminology and notations are standard, and can be found for
instance in \cite{Kargapolov-Merzljakov:Book}. Throughout let $F(X)$ be the
free group on $X$, consisting of all reduced words of letters from $X \cup
X^{-1}$, with binary operation induced by concatenation and cancellation of
$x$ with $x^{-1}$, and the empty string as identity; see
\cite[p.89]{Kargapolov-Merzljakov:Book} for notations and details. It is
customary to write $F(x_1, \ldots, x_k)$ if $X=\{x_1, \ldots, x_k\}$ is
finite. The symbol $\cong$ denotes isomorphism of groups, and, for a group
$H$ and a set $S \subseteq H$, by $\Ncl_{H}(S) $ one denotes the normal
closure of $S$ in $H$; if $H$ is clear from the context one writes $\Ncl(S)$.
A~\emph{presentation} of a group $G$ is a pair $\langle X; R\rangle$  with
$R\subseteq F(X)$ such that $G\cong {F(X)}{/\Ncl_{F(X)}(R)}$. It is
legitimate to write $G=\langle X; R\rangle$ since the presentation identifies
$G$ up to group isomorphism. The congruence corresponding to the normal
subgroup $\Ncl_{F(X)}(R)$ will be written as $=_{G}$; the relation $=_{G}$ is
clearly an equivalence relation on $F(X)$, which we will call the \emph{word
problem} of $G=\langle X; R \rangle$; the $=_{G}$-equivalence
class of an element $x$ will be denoted by $[x]_{G}$. If $X$ is a finite set then we can
encode the elements of $F(X)$ by natural numbers, and multiplication becomes
a binary computable function. A group $G=\langle X; R\rangle$ is
\emph{finitely presented} (f.p.) if both $X$ and $R$ are finite. It is easy
to see (under coding) that in this case, $=_{G}$ is a computably enumerable
equivalence relation on $\omega$.

Our terminology is slightly nonstandard because by the word problem of a
f.p.\ group $G=\langle X;  R\rangle$, one usually means the equivalence class
$[1]_{G}$ of the identity element $1$, and the problem of deciding, for a given word $w\in F(X)$, whether
$w \in [1]_{G}$. The difference is minor, though, since $=_{G}$ and the set
$[1]_{G}$ are $m$-equivalent. The $1$-reduction $x \mapsto \langle
x,1\rangle$ shows that $[1]_G \, \le_1 \, =_G$ (where the symbol $\le_{1}$
denotes $1$-reducibility), and the $m$-reduction $\langle x,y\rangle \mapsto
xy^{-1}$ shows that  $=_G \, \le_m \, [1]_G$ (where the symbol $\le_{m}$
denotes $m$-reducibility).

\subsection*{Effective inseparability}
The reader is referred to \cite{Soare:Book} for any unexplained notation and
terminology from computability theory. A partial computable function which
is total  is  simply called   a
computable function. If  $A, B
\subseteq \omega$, one writes  $A\equiv B$ if there exists a computable
permutation $f$ of $\omega$ such that $f(A)=B$;   if $(A,B)$ and $(C,D)$ are
disjoint pairs of subsets of $\omega$, one writes  $(A,B) \equiv (C,D)$, if
there exists a computable permutation $f$ of $\omega$ such that $f(A)=C$ and
$f(B)=D$. We recall that a  disjoint pair of sets $(A,B)$ is called
\emph{recursively inseparable} if there is no recursive set  $X$ such that $A
\subseteq X$ and $B \subseteq X^c$, where $X^{c}$ denotes the complement of
$X$. The following property is stronger: $(A,B)$  is \emph{effectively
inseparable} (\emph{e.i.}) if there is \emph{productive} function, that is, a
partial computable function $\psi(u,v)$ such that
\[
(\forall u,v)[A \subseteq W_u \mathand B \subseteq W_v \mathand W_u \cap W_v =
\emptyset \Rightarrow \psi(u,v)\downarrow \notin W_u \cup W_v].
\]

\begin{rem}\label{stuff-on-ei}
It is well known (see e.g.\ \cite[II.4.13]{Soare:Book}) that if $(A,B)$ and
$(C,D)$ are disjoint pairs of c.e.\ sets then:
\begin{enumerate}
\item[-] $(C,D)$ e.i.\ implies $(A,B) \le_1 (C,D)$;
\item[-] if both pairs are e.i.\ then $(A,B) \equiv (C,D)$;
\item[-] if $(A,B) \le_m (C,D)$ and $(A,B)$ is e.i.\ then $(C,D)$ is e.i.\
    as well;
\item[-] if $A \subseteq C$, $B \subseteq D$ and $(A,B)$ is e.i.\ then
    $(C,D)$ is e.i.\ as well.
\end{enumerate}
 \end{rem}
The following fact  about e.i.\ pairs of c.e.\  sets will be used in the
proof of Theorem~\ref{thm:main1}.

\begin{lem}\label{lem:uei-product}
If $(A,B)$ and $(C,D)$ are e.i.\ pairs of c.e.\ sets, then so is
the pair $(A\times C, B\times D)$. Moreover,   a productive function for
$(A\times C, B\times D)$ can be found uniformly from productive functions for
$(A,B)$ and $(C,D)$.
\end{lem}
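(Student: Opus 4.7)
Proof Plan. The plan is to reduce the question to the transfer results for effective inseparability already collected in Remark~\ref{stuff-on-ei}. Specifically, I will produce an $m$-reduction $f : (A, B) \le_m (A \times C, B \times D)$ and then invoke item~3 of that remark; the desired uniformity will follow because both item~1 and item~3 of Remark~\ref{stuff-on-ei} arise from uniform constructions.

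To build $f$, I first apply item~1 of Remark~\ref{stuff-on-ei}: since $(C, D)$ is e.i., the disjoint c.e.\ pair $(A, B)$ satisfies $(A, B) \le_1 (C, D)$, uniformly in the productive function of $(C, D)$ and c.e.\ indices of $A$ and $B$. In particular, this gives a total computable $h : \omega \to \omega$ with $h(A) \subseteq C$ and $h(B) \subseteq D$. Let $\langle \cdot , \cdot \rangle : \omega \times \omega \to \omega$ be a fixed computable pairing bijection and set $f(x) = \langle x, h(x) \rangle$. Because the first coordinate of $f(x)$ is $x$ itself, and $h(x) \in C$ (resp.\ $h(x) \in D$) whenever $x \in A$ (resp.\ $x \in B$), one has $x \in A \Leftrightarrow f(x) \in A \times C$ and $x \in B \Leftrightarrow f(x) \in B \times D$, so $f$ is an $m$-reduction $(A, B) \le_m (A \times C, B \times D)$.

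Now I apply the uniform version of item~3 of Remark~\ref{stuff-on-ei}. Let $\psi$ be the productive function of $(A, B)$. Given $u, v$ with $A \times C \subseteq W_u$, $B \times D \subseteq W_v$, and $W_u \cap W_v = \emptyset$, use $s$-$m$-$n$ to compute $u', v'$ with $W_{u'} = f^{-1}(W_u)$ and $W_{v'} = f^{-1}(W_v)$. These are disjoint c.e.\ sets containing $A$ and $B$, so $z := \psi(u', v')$ is defined and avoids $W_{u'} \cup W_{v'}$; hence $\phi(u, v) := f(z) \notin W_u \cup W_v$. This $\phi$ is the desired productive function for $(A \times C, B \times D)$, manifestly computed uniformly from the two given productive functions.

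The only technical point deserving attention is the uniformity of item~1 of Remark~\ref{stuff-on-ei}: the classical fact that an effectively inseparable c.e.\ pair is $1$-complete for disjoint c.e.\ pairs (established by a recursion-theorem argument on top of the productive function). Once that uniformity is in hand, everything else is routine $s$-$m$-$n$ bookkeeping together with a direct application of the reduction lemma.
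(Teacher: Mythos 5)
Your proposal is correct and follows essentially the same route as the paper: both obtain a computable $g$ with $g(A)\subseteq C$, $g(B)\subseteq D$ from the effective inseparability of $(C,D)$, use $x\mapsto\langle x,g(x)\rangle$ as a reduction of $(A,B)$ to $(A\times C, B\times D)$, and conclude via the transfer of effective inseparability along $m$-reductions from Remark~\ref{stuff-on-ei}. Your write-up merely unfolds that last step and the uniformity claims in more detail than the paper does.
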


\begin{proof}
We prove in fact that if $(A,B)$ is a disjoint pair of c.e.\  sets, and
$(C,D)$ is e.i., then $(A,B) \le_{1} (A\times C, B\times D)$: hence, if
$(A,B)$ is e.i., then $(A\times C, B\times D)$ is e.i.\  as well. Let $g$ be
a computable function such that $g(A)\subseteq C$ and $g(B)\subseteq D$; such
a function exists because $(A,B) \le_{1} (C,D)$.   Clearly the
$1$-$1$ computable function
\[
f(x)=\langle x, g(x)\rangle
\]
provides a $1$-reduction showing that $(A,B) \le_{1} (A\times C, B\times D)$.

The claim about uniformity is straightforward.
\end{proof}

  Although not used in this paper, it is worth noting that a statement
analogous to the   lemma  above holds  when we replace ``effectively
inseparable'' by the weaker notion of being recursively inseparable.

\begin{prop}\label{lem:rec-ins}
If $(A,B)$ and $(C,D)$ are recursively inseparable pairs of c.e.\  sets,
then so is $(A\times C, B\times D)$.
\end{prop}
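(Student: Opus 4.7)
The plan is to argue by contradiction: assuming some computable set $X\subseteq\omega\times\omega$ satisfies $A\times C\subseteq X$ and $(B\times D)\cap X=\emptyset$, I will manufacture from $X$ a computable $\{0,1\}$-valued function whose preimage of~$1$ is a computable separator for $(A,B)$, contradicting recursive inseparability of $(A,B)$. The strategy mirrors Lemma~\ref{lem:uei-product} in spirit but replaces the productive-function machinery there by a uniform witness search, using recursive inseparability of $(C,D)$ to guarantee that the search terminates.

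For each $a\in\omega$ set $X_a=\{c:(a,c)\in X\}$; this is a uniformly computable family of subsets of~$\omega$. The critical observation is that no $X_a$ can separate $(C,D)$: otherwise $X_a$ would itself be a computable separator of $(C,D)$, violating the hypothesis. Equivalently, for every $a\in\omega$ at least one of the following c.e.\ conditions holds: \emph{(i)} some $d\in D$ satisfies $(a,d)\in X$, or \emph{(ii)} some $c\in C$ satisfies $(a,c)\notin X$.

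I would then dovetail the enumerations of $C$ and $D$ and, uniformly in~$a$, search for the first witness of type \emph{(i)} or \emph{(ii)}; by the previous paragraph this search always terminates. Define $f(a)=1$ if the first witness is of type \emph{(i)}, and $f(a)=0$ if it is of type \emph{(ii)}. The resulting $f\colon\omega\to\{0,1\}$ is total and computable. For $a\in A$ the hypothesis $A\times C\subseteq X$ rules out any witness of type \emph{(ii)}, so $f(a)=1$; symmetrically, for $a\in B$ the hypothesis $(B\times D)\cap X=\emptyset$ rules out any witness of type \emph{(i)}, so $f(a)=0$. Hence $f^{-1}(1)$ is a computable superset of $A$ disjoint from $B$, contradicting recursive inseparability of $(A,B)$.

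I do not foresee a serious technical obstacle: the only point requiring care is to verify that the uniform witness search is effective and always halts, and both follow at once from the decidability of $X$, the c.e.\ presentations of $C$ and $D$, and the contradiction to RI of $(C,D)$ that would arise otherwise. The two hypotheses play symmetric roles, with RI of $(C,D)$ ensuring termination of the search and RI of $(A,B)$ providing the final contradiction.
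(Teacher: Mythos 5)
Your proof is correct and is essentially the paper's own argument with the roles of the two pairs interchanged: the paper sections the separator along the second coordinate, uses inseparability of $(A,B)$ to guarantee the witness search halts, and derives the final contradiction from inseparability of $(C,D)$, whereas you do the mirror image. Since the statement is symmetric in the two pairs, this is the same proof.
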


\begin{proof}
Assume that $R$ is a computable set such that $A\times
C \subseteq R$ and $B\times D \subseteq R^{c}$. For every
$v$, let
\[
R_{v}=\{x: \langle x, v\rangle \in R\}.
\]
We observe that for every $v$ there exists $x \in A$ such that $\langle x,
v\rangle \in R^{c}$, or there exists $x \in B$ such that $\langle x, v
\rangle \in R$;  otherwise $A\subseteq R_{v}$ and $B \subseteq R_{v}^{c}$,
which would contradict the inseparability of $(A,B)$. Let $R_{A} $ and $R_{B}
$ be computable binary relations such that
\begin{align*}
&(\exists x)[x \in A \,\&\,\langle x,v\rangle \in R^{c}] \Leftrightarrow
            (\exists s)R_{A}(v,s),\\
&(\exists x)[x \in B \,\&\,\langle x,v\rangle \in R ] \Leftrightarrow
            (\exists s)R_{B}(v,s),
\end{align*}
and define
\[
U=\{v: (\exists s)[R_{A}(v,s)] \,\&\, (\forall t \le s) \neg R_{B}(v,t)]\}.
\]
The set $U$ is decidable, as we have seen that for every $v$, there exists $x
\in A$ such that $\langle x,v\rangle \in R^{c}$, or there exists $x \in B$
such that $\langle x,v\rangle \in R$. Now $v \in C \cap U$ implies  $(\exists
x)[x \in A \,\&\, \langle x,v\rangle \in R^{c}]$ contrary to $A \times C
\subseteq R$. Similarly, $v \in D \smallsetminus U$ implies $(\exists x)[x
\in B \,\&\,\langle x,v\rangle \in R]$, contrary to  $B \times D \subseteq
R^c$. We conclude  that $C \subseteq U^{c}$ and $D \subseteq U$, which is the
final contradiction.
\end{proof}

\subsection*{C.e.\  equivalence relations and word problems}
Computably enumerable equivalence relations have been  studied extensively;
see for instance \cite{Bernardi-Sorbi:Classifying, Ershov:positive,
Gao-Gerdes}. While they are called \emph{positive} in the Russian literature,
we call such an equivalence relation a \emph{ceer} following Andrews et al.\
\cite{ceers}. $\Sigma^{0}_{1}$-universal ceers arising naturally in formal
logic have been pointed out for instance in~\cite{Bernardi-Montagna:extensional, Montagna:ufp,
Visser:Numerations}.

\begin{defn}[\cite{Bernardi:the-relation-provable}]\label{uei eqrel}
A ceer $E$ is called \emph{uniformly effectively inseparable} (\emph{u.e.i.})
if there is  a computable  binary function $p$ such that, whenever
$a\cancel{\rel{E}} b$, the partial computable function $\psi(u,v)
=\phi_{p(a,b)}(u,v)$ witnesses that the pair of equivalence classes $([a]_E,
[b]_E)$ is e.i.\
\end{defn}
\noindent As already observed in the introduction, it is shown in
\cite{ceers} that every u.e.i.\ ceer is $\Sigma^{0}_{1}$-universal. It is
worth recalling that uniformity plays a crucial role in yielding
universality, as there are non-universal ceers yielding a partition of
$\omega$ into effectively inseparable pairs of distinct classes~\cite{ceers}.

Surprisingly, f.p.\ groups  with a $\Sigma^0_1$-universal word problem
appeared in the literature prior to any explicit study of computable
reducibility among equivalence relations.  Charles F.\
Miller~III~\cite{MillerIII-decision} proved that there exists a f.p.\  group
with $\Sigma^0_1$-universal word problem.  He  shows that  another
interesting equivalence relation is $\Sigma^0_1$-universal:  the isomorphism
relation between finite presentations of groups, which (via encoding of
finite presentations  by numbers)  can be seen as a ceer.  Not knowing  of
this much earlier result, Ianovski, Miller, Ng, and  Nies~\cite[Question
6.1]{Ianovski-et-al} had recently posed this as an open question.

\begin{thm}[\cite{MillerIII-decision}]\label{thm:MillerIII} \mbox{}
\begin{enumerate}
  \item  Given a ceer $E$  one can effectively build a f.p.\ group
      $G_{E}=\langle X; R \rangle$, and a computable sequence of words
      $(w_{i})_{ i \in \omega}$ in $ F(X)$ such that, for every $i,j$,
      \[
i \rel{E} j \Leftrightarrow w_{i}=_{G_{E}} w_{j}.
\]
  \item Given  a finite presentation $\langle X; R \rangle$ of a group $G$
      one  can effectively find a computable family  $(H^G_w )_{w  \in
      F(X)}$ of f.p.\ groups such that, for all $v, w \in F(X)$,
\[
v =_G w \Leftrightarrow H^G_{v} \cong H^G_{w}.
\]
\end{enumerate}
\end{thm}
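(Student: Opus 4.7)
For Part~(1), the strategy is to first encode the ceer $E$ as a finitely generated, recursively presented group, and then to apply Higman's embedding theorem. Work inside $F(a,b)$ using the well-known free family $y_{n} := a^{n} b a^{-n}$, and form the two-generator recursively presented group
\[
\tilde G_{E} = \la a, b \, ; \, \set{ y_{i} y_{j}^{-1} : i \rel{E} j } \ra.
\]
A standard combinatorial-group-theory argument (using freeness of the $y_{n}$ and the fact that $E$ is already an equivalence relation) shows $y_{i} =_{\tilde G_{E}} y_{j} \iff i \rel{E} j$; since $E$ is c.e., $\tilde G_{E}$ is indeed recursively presented. Higman's embedding theorem, applied uniformly, produces a f.p.\ group $G_{E}$ into which $\tilde G_{E}$ embeds; taking $w_{i}$ to be the image of $y_{i}$ in $G_{E}$ gives the required computable sequence. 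The only serious external input here is Higman's theorem itself, including its uniformity in an index for the recursively enumerated set of relators.

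For Part~(2), the plan is to produce a computable reduction from the ceer $=_{G}$ to the ceer of isomorphism among (indices of) finite presentations, in the spirit of Adian--Rabin. Uniformly in $w\in F(X)$, I would attach to $G$ an auxiliary \emph{gadget}, typically an iterated HNN extension whose stable letters are governed by relations that refer explicitly to $w$, to form a f.p.\ group $H^{G}_{w}$. If $v =_{G} w$ then the associated presentations are formally identical after trivial rewriting inside the ambient copy of $G$, so $H^{G}_{v} \cong H^{G}_{w}$ is immediate. For the converse direction one shows that every isomorphism $H^{G}_{v} \cong H^{G}_{w}$ must transport the gadget, by isolating a canonical subgroup of $H^{G}_{w}$ (for instance a centralizer of a stable letter, or a maximal abelian/normal subgroup) whose isomorphism type forces the preserved element to be $=_{G}$-equivalent to $w$; this kind of rigidity is established by a Britton's-lemma analysis of the HNN extension.

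The principal technical obstacle is precisely this converse in Part~(2): producing a group-theoretic invariant of $H^{G}_{w}$ that is at the same time preserved by every isomorphism and fine enough to recover the $=_{G}$-class of $w$. The gadget must be rigid enough to prevent accidental isomorphisms across distinct $=_{G}$-classes while remaining uniformly finitely presentable in the parameter $w$. The standard Adian--Rabin machinery (iterated HNN extensions plus Britton's lemma) is designed to accomplish exactly this, but the bookkeeping ensuring both directions of the biconditional $v=_{G} w \iff H^{G}_{v} \cong H^{G}_{w}$ is intricate and is the part that would consume most of the work.
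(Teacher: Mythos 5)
You should first note that the paper itself does not prove this theorem: its ``proof'' is a pointer to Miller~III's monograph (p.~90f for item (1), Theorem~V.1 for item (2)), so any self-contained argument goes beyond what the authors do. Your attempt at Part~(1), however, contains a genuine error. The subgroup of $F(a,b)$ generated by the elements $y_n=a^nba^{-n}$ is the normal closure of $b$, which is free on $\{y_n: n\in\mathds{Z}\}$, and conjugation by $a^k$ permutes this basis by shifting indices. Hence $\Ncl_{F(a,b)}(\{y_iy_j^{-1}: i \rel{E} j\})$ contains $a^k(y_iy_j^{-1})a^{-k}=y_{i+k}y_{j+k}^{-1}$ for every $k\in\mathds{Z}$, so in $\tilde G_E$ one gets $y_{i+k}=y_{j+k}$ for all $k$ whenever $i \rel{E} j$. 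The relation induced on indices is therefore the translation closure of $E$, not $E$ itself: for the ceer whose only nontrivial class is $\{0,1\}$, your group identifies \emph{all} the $y_n$, even though $1 \cancel{\rel{E}} 2$. ``Freeness of the $y_n$'' is not the relevant property; what one needs is that conjugates of the relators do not land back in the coded family in unintended ways, and this is exactly why Miller (and the present paper, in the words $b_i$ appearing in the proof of Theorem~\ref{thm:main1}) uses much more elaborate nested conjugate/commutator words rather than the simple conjugates $a^nba^{-n}$. The Higman embedding step, including its uniformity, is fine once a correct recursively presented $\tilde G_E$ is in hand.

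For Part~(2) you offer only a plan. The forward direction ($v=_G w\Rightarrow H^G_v\cong H^G_w$) is indeed immediate, but the entire content of Miller's Theorem~V.1 is the converse --- the rigidity of the gadget --- and you explicitly defer it (``the bookkeeping \ldots\ is the part that would consume most of the work''). Be aware also that the off-the-shelf Adian--Rabin construction only detects whether $w=_G 1$ (via triviality, or possession of some Markov property, of $H^G_w$); that yields a reduction of the \emph{set} $[1]_G$, not of the \emph{equivalence relation} $=_G$. To reduce the ceer $=_G$ one must show that $H^G_v\cong H^G_w$ forces $v=_G w$ even when both classes are nontrivial, which is precisely the step your sketch leaves open.
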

\begin{proof}
The first item is obtained  in \cite[p~90f]{MillerIII-decision}, used as a
preliminary step to prove Theorem~V.2. The second item is
\cite[Theorem~V.1]{MillerIII-decision}.
\end{proof}

\begin{cor}  \label{cor:Miller} \mbox{}
\begin{enumerate}
\item There exists a f.p.\ group $G$ such that $=_{G}$ is a
    $\Sigma^{0}_{1}$-universal ceer.
\item The isomorphism problem  $\cong_{f.p.}$ between finite presentations
    of groups is a $\Sigma^{0}_{1}$-universal ceer.
\end{enumerate}

\end{cor}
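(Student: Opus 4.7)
The plan is to derive both items directly from Theorem~\ref{thm:MillerIII} by composing reductions.

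For item (1), I would start by fixing any $\Sigma^{0}_{1}$-universal ceer $E_{0}$ (one exists, as is standard; for example the ceer given by provable equivalence in Peano arithmetic, cf.~\cite{Bernardi-Montagna:extensional}). Apply Theorem~\ref{thm:MillerIII}(1) to $E_{0}$ to obtain a f.p.\ group $G=G_{E_{0}}=\la X;R\ra$ together with the computable sequence of words $(w_{i})_{i\in\omega}$ in $F(X)$ satisfying
\[
i\rel{E_{0}}j\Leftrightarrow w_{i}=_{G}w_{j}.
\]
Under the fixed coding of $F(X)$ by natural numbers, the map $i\mapsto w_{i}$ is computable, so this equivalence exhibits $E_{0}\le =_{G}$. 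Since $G$ is f.p., $=_{G}$ is itself a ceer. For any ceer $S$ we have $S\le E_{0}\le =_{G}$ by universality of $E_{0}$, so $=_{G}$ is $\Sigma^{0}_{1}$-universal.

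For item (2), take the group $G$ just produced and apply Theorem~\ref{thm:MillerIII}(2) to its finite presentation $\la X;R\ra$. This yields a computable family $(H^{G}_{w})_{w\in F(X)}$ of f.p.\ groups such that
\[
v=_{G}w\Leftrightarrow H^{G}_{v}\cong H^{G}_{w}.
\]
Fixing a standard encoding of finite presentations by natural numbers, the assignment $w\mapsto\la\,\text{code of }H^{G}_{w}\,\ra$ is computable, providing a reduction $=_{G}\le\cong_{f.p.}$. Since $\cong_{f.p.}$ is c.e.\ (isomorphism of finite presentations is witnessed by finite sequences of Tietze transformations, or equivalently by mutual derivability in the group calculus), it is a ceer. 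Composing with the reduction from item (1) gives $S\le =_{G}\le \cong_{f.p.}$ for every ceer $S$, so $\cong_{f.p.}$ is $\Sigma^{0}_{1}$-universal.

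There is no real obstacle here: the corollary is a straightforward consequence of Theorem~\ref{thm:MillerIII} once one observes that the maps $i\mapsto w_{i}$ and $w\mapsto H^{G}_{w}$ are computable under the agreed codings, and that $E_{0}\le =_{G}$ together with universality of $E_{0}$ already forces $=_{G}$ to be $\Sigma^{0}_{1}$-universal. The only point requiring a line of care is the verification that $\cong_{f.p.}$ is itself a ceer under the chosen encoding of finite presentations, which is classical.
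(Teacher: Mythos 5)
Your proposal is correct and follows essentially the same route as the paper: fix a $\Sigma^{0}_{1}$-universal ceer, apply Theorem~\ref{thm:MillerIII}(1) to get $E\le\,=_{G_E}$, then apply Theorem~\ref{thm:MillerIII}(2) and compose to get $E\le\,\cong_{f.p.}$. Your added verifications that $=_G$ and $\cong_{f.p.}$ are themselves ceers are sound and merely make explicit what the paper leaves implicit.
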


\begin{proof}
Let $E$ be a $\Sigma^{0}_{1}$-universal ceer. Then
\begin{enumerate}
  \item by Theorem~\ref{thm:MillerIII}(1), $E\le \, =_{G_E }$, and thus
      $=_{G_{E}}$ is $\Sigma^{0}_{1}$-universal;
  \item by Theorem~\ref{thm:MillerIII}(2),
\[
i \rel{E} j \Leftrightarrow H^{G_{E}}_{v} \cong H^{G_{E}}_{w}.
\]
This shows that $E \le \, \cong_{f.p.}$, whence $\cong_{f.p.}$ is
$\Sigma^{0}_{1}$-universal.
\end{enumerate}
\end{proof}

We observe   that $\Sigma^0_1$-universality of the word problem  does not
necessarily imply being u.e.i.

\begin{thm}\label{thm:notei}
There exists a f.p.\ group $G$ such that $=_G$ is $\Sigma^0_1$-universal, but
not u.e.i.
\end{thm}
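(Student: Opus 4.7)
The plan is to start with a finitely presented group $G_0 = \la X;\, R\ra$ whose word problem is $\Sigma^0_1$-universal (guaranteed by Corollary~\ref{cor:Miller}(1)), and form the direct product with the cyclic group of order two, namely $G = G_0 \times \mathbb{Z}/2\mathbb{Z}$, which is finitely presented via
\[
G = \la X \cup \{t\};\, R \cup \{t^2\} \cup \{txt^{-1}x^{-1} : x\in X\}\ra.
\]
Since $G_0$ embeds injectively into $G$ as a direct factor, the inclusion $F(X) \hookrightarrow F(X\cup\{t\})$ is a computable reduction of $=_{G_0}$ to $=_G$, yielding that $=_G$ is $\Sigma^0_1$-universal.

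To show $=_G$ is not u.e.i., I would introduce the parity homomorphism $p: F(X\cup\{t\}) \to \mathbb{Z}/2\mathbb{Z}$ defined on generators by $p(x) = 0$ for $x \in X$ and $p(t) = 1$. Every defining relator of $G$ has trivial $p$-image (the words in $R$ lie in $F(X)$ so vanish under $p$; $p(t^2) = 0$; and $p(txt^{-1}x^{-1}) = 0$), so $p$ factors through $G$; in particular, $w_1 =_G w_2$ implies $p(w_1) = p(w_2)$. Since $p$ is computable, the disjoint decidable sets $p^{-1}(0)$ and $p^{-1}(1)$ partition $F(X\cup\{t\})$, with $[1]_G \subseteq p^{-1}(0)$ and $[t]_G \subseteq p^{-1}(1)$. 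Thus the pair of distinct $=_G$-classes $([1]_G,\, [t]_G)$ is recursively separable. Since effective inseparability implies recursive inseparability (a productive function fed the c.e.\ indices for $p^{-1}(0)$ and $p^{-1}(1)$ would be required to output a value outside $p^{-1}(0) \cup p^{-1}(1) = \omega$), this pair is not e.i., and so $=_G$ is not u.e.i.

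The main conceptual point, rather than any real technical obstacle, is that a direct product with any nontrivial finite group introduces a computable invariant on $F(X \cup \{t\})$ that is constant on $=_G$-classes, and this automatically produces a recursively separable pair of distinct classes, thereby defeating u.e.i., while the faithful injection of $G_0$ into $G$ continues to transfer the $\Sigma^0_1$-universality.
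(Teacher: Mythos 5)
Your proof is correct and follows essentially the same strategy as the paper's: the paper forms the free product $H \ast F(v)$ with a new generator $v$ and uses the exponent sum of $v$ as a computable invariant separating $[1]_G$ from $[v]_G$, whereas you form the direct product with $\mathds{Z}/2\mathds{Z}$ and use the exponent sum of $t$ modulo $2$. The underlying mechanism is identical in both cases --- adjoin a generator so that $\Sigma^0_1$-universality transfers along the embedding of the original group, while a computable homomorphism onto an abelian group, constant on $=_G$-classes, recursively separates a pair of distinct classes and thereby defeats u.e.i.
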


\begin{proof}
We build  a f.p.\ group $G$ such that $=_G$ is $\Sigma^0_1$-universal, but it
does not even yield a partition into recursively  inseparable pairs of
disjoint equivalence classes. To see this, let $H = \langle X; R \rangle$ be
a f.p.\  group such that $=_H$ is $\Sigma^0_1$-universal. Let $v\not \in X$
be a new letter. The free product $G=H \ast F(v)$ (where $F(v)$ is the free
group on $v$) has the finite presentation $\langle X, v; R\rangle$. Since $H$
can be seen as a subgroup of $G$ and the embedding is computable, the group
$G$ has $\Sigma^0_1$-universal word problem. Any word $w \in F(X \cup \{v\})$
can be uniquely written as $w=h_1v^{n_1}h_2\cdots v^{n_r}h_{r+1}$, with $h_j
\in F(X)$ and $n_j \neq 0$, for all $j$. Let
\[
n_v(w)=n_1+ \cdots +n_r
\]
be the exponent sum of $v$ in $w$, and let $S=\{w \in F(X \cup \{v\}):
n_v(w)=0\}$. It is immediate that $[1]_G\subseteq S$ and $[v]_G \subseteq
S^c$, so  the recursive set $S$ separates the pair $([1]_G, [v]_G)$.
\end{proof}

The proof of the previous theorem suggests an additional comment. We observe
that if in a group $G$ the operations are computable, then all
$=_G$-equivalence classes are uniformly computably isomorphic: the function
$w \mapsto wu^{-1}v$ is a computable permutation of the group (uniformly
depending on $u,v$) which maps $[u]_G$ onto $[v]_G$. Thus if an equivalence
class $[u]_G$ is creative, so is any other equivalence class $[v]_G$, and
creativeness holds uniformly, i.e. there is a computable function $p$ such
that, for every $v$, $\phi_{p(v)}$ is productive for the complement of
$[v]_G$. Nothing like this holds for effective inseparability, or for
computable inseparability. Indeed, one can take the group $H$ considered in
the proof of Theorem~\ref{thm:notei} to be such that its word problem yields
at least a pair of effectively inseparable classes (for instance take $H=D$,
where $D$ is the group built in Theorem~\ref{thm:main1} in which all distinct
pairs of equivalence classes are effectively inseparable). Thus the word
problem of the group $G$ of Theorem~\ref{thm:notei} does have effectively
inseparable classes, but not all pairs are so, since there are pairs which
can be computably separated.

\section{A finitely presented group with u.e.i.\ word problem}
We now build   a f.p.\  group  with a  word problem that is a u.e.i.\ ceer.
We first provide   Lemma~\ref{lem:normal-closure}   that if $G$ is a f.p.\
group containing a word $w$ such that $([1]_G, [w]_G)$ is e.i., then all
disjoint pairs $([s]_G, [t]_G)$ with $s, t \in \Ncl_G(w)$ are   e.i.\ in a
uniform way. For the main construction, using a result of Miller III, we take
a computably presented group~$A$ containing a word~$w$ such that the pair
$([1]_A, [w]_A)$ is e.i. By the Higman Embedding Theorem combined with a
construction due to Rabin, we embed $A$ into a f.p.\ group $D$ so that if
$N$ is a non-trivial normal subgroup of $D$, with $w \in N$, then $N=D$.
Taking $N=\Ncl_D(w)$
and observing that the pair $([1]_D, [w]_D)$ is also  e.i., the lemma shows
that $=_D$ is u.e.i.

\begin{lem}\label{lem:normal-closure}
Let $G=\langle X; R\rangle$ be a given f.p.\ group, and let $w$  be an element
of $F(X)$ such that  $([1]_G, [w]_G)$ is e.i. Let $N=\Ncl_G(w)$. For   $s,t
\in N$ such that  $s \ne_G t$, the pair of sets $([s]_G,[t]_G)$ is e.i.\
uniformly in $s,t$.
\end{lem}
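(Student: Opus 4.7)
The plan is to establish, uniformly in $s, t$, a computable $m$-reduction
\[
([1]_G, [w]_G) \;\le_m\; ([s]_G, [t]_G).
\]
Since $([1]_G, [w]_G)$ is e.i.\ by hypothesis, the third item of Remark~\ref{stuff-on-ei} (whose standard proof is uniform) will then yield a productive function for $([s]_G, [t]_G)$ depending computably on $s, t$.

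The key observation is that $N = \Ncl_G(w)$ is a subgroup of $G$, so $ts^{-1} \in N$. By definition of the normal closure in $G$, there exist $n \in \omega$, elements $a_1, \ldots, a_n \in F(X)$, and signs $\epsilon_1, \ldots, \epsilon_n \in \{-1,+1\}$ such that
\[
ts^{-1} \;=_G\; a_1 w^{\epsilon_1} a_1^{-1} \cdot a_2 w^{\epsilon_2} a_2^{-1} \cdots a_n w^{\epsilon_n} a_n^{-1}.
\]
Because $G$ is finitely presented, $=_G$ is c.e., so such a tuple $(n, a_i, \epsilon_i)$ can be located by exhaustive search, uniformly in $s, t$. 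Given it, set
\[
\phi(x) \;=\; s \cdot a_1 x^{\epsilon_1} a_1^{-1} \cdot a_2 x^{\epsilon_2} a_2^{-1} \cdots a_n x^{\epsilon_n} a_n^{-1}.
\]
Substituting $x =_G 1$ yields $\phi(x) =_G s$; substituting $x =_G w$ yields $\phi(x) =_G s \cdot (ts^{-1}) =_G t$. Hence $\phi$ sends $[1]_G$ into $[s]_G$ and $[w]_G$ into $[t]_G$, which is the required $m$-reduction.

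No truly hard step arises. The conceptual content is simply the observation that replacing $w$ by a free variable $x$ in a conjugate-product expression for $ts^{-1}$ turns that expression into a reduction. The only care required is the uniformity in $s, t$: an index for $\phi$ is computable from $s, t$ because the c.e.\ search for $(n, a_i, \epsilon_i)$ terminates exactly because $ts^{-1} \in N$, and the conversion of an $m$-reduction plus a productive function for $([1]_G, [w]_G)$ into a productive function for $([s]_G, [t]_G)$ is standard and uniform.
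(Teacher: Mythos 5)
Your proof is correct in substance but takes a genuinely different, and in fact more economical, route than the paper's. The paper first replaces $([s]_G,[t]_G)$ by $([1]_G,[r]_G)$ with $r=s^{-1}t\in N$, and then argues by induction on a conjugate-product representation of $r$ that effective inseparability propagates through inversion, conjugation, and products; the product step is the delicate one and requires a separate lemma (Lemma~\ref{lem:uei-product}, that the Cartesian product of two e.i.\ pairs is e.i.) followed by the $m$-reduction $\langle w,z\rangle\mapsto wz$. You bypass all of this: by substituting a free variable for $w$ simultaneously in every slot of the conjugate-product expression, you obtain in one stroke an $m$-reduction of the single pair $([1]_G,[w]_G)$ to $([s]_G,[t]_G)$, and then invoke the (uniform) transfer of effective inseparability along $m$-reductions, i.e.\ the third item of Remark~\ref{stuff-on-ei}. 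This avoids the product lemma entirely and makes the uniformity in $s,t$ transparent, since the only search involved is the one for the decomposition, which terminates because $ts^{-1}\in N$ and $=_G$ is c.e. One small slip: with the decomposition written for $ts^{-1}$, left-multiplying by $s$ gives $\phi(w)=_G s\,t\,s^{-1}$, which need not equal $t$ in a nonabelian group; you should either append $s$ on the right, setting $\phi(x)=\bigl(\prod_i a_i x^{\epsilon_i}a_i^{-1}\bigr)\cdot s$, or else decompose $s^{-1}t$ and keep the left factor $s$. With that correction the argument is complete.
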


\begin{proof}
Since $([s]_G, [t]_G)\equiv ([1]_G, [s^{-1}t]_G)$, it suffices to show that
$([1]_G, [r]_G)$ is uniformly e.i.\ for any $r \in N \smallsetminus [1]_G$.
Note that $N$ consists of the products of conjugates of $w$ and of $w^{-1}$,
so  it is enough to show:
\begin{enumerate}
  \item if $([1]_G, [u]_{G})$ is e.i., then so is $([1]_G, [u^{-1}]_{G})$:
      this follows from the fact that $([1]_{G}, [u]_{G}) \equiv
      ([u^{-1}]_{G}, [1]_{G},)$, via the computable permutation $x \mapsto
      u^{-1} x$;
  \item if $([1]_G, [u]_{G})$ is e.i., then so is $([1]_{G}, [g^{-1}u
      g]_{G})$ for every $g \in G$: the computable permutation $x \mapsto
      g^{-1} x g$ provides an isomorphism $([1]_G, [u]_{G})\equiv
      ([1]_{G}, [g^{-1}u g]_{G}) $;
  \item if $uv \ne_G 1$ and the pairs $([1]_{G}, [u]_{G})$ and $([1]_{G},
      [v]_{G})$ are e.i., then $([1]_{G}, [uv]_{G})$ is e.i.:  \\
      By Lemma~\ref{lem:uei-product} the pair $([1]_G \times [1]_G,
      [u]_G\times [v]_G)$ is e.i. On the other hand, let
      \begin{align*}
       X&=\{\langle w,z\rangle : wz \in [1]_G\},\\
       Y&=\{\langle w,z\rangle : wz \in [uv]_G\}.
      \end{align*}
Then    $[1]_G \times [1]_G \subseteq X$ and $ [u]_G\times [v]_G\subseteq
      Y$, and thus, by Remark~\ref{stuff-on-ei}, $(X, Y)$ is e.i. Since
      $(X,Y)\le_m ([1]_{G}, [uv]_{G})$ via the mapping $\langle w, z\rangle
      \mapsto wz$, it follows that $([1]_{G}, [uv]_{G})$ is e.i., as
      desired.
\end{enumerate}
Each step provides being e.i.\ in a uniform fashion. If $r\in N$ we can
obtain its  representation as a product of conjugates of $w$ and of $w^{-1}$
effectively. Since $[1]_G$ and $N$ are c.e., there is a partial computable
function $p$ such that $\phi_{p(a,r)}$ is productive for $([a]_G, [r]_G)$,
when $a \in [1]_G$ and $r \in N \smallsetminus [1]_G$. So $([1]_G,[r]_G)$ is
e.i.\ uniformly in $r$, whence $([s]_G,[t]_G)$ is e.i.\ uniformly in $s,t$
 as required.
 \end{proof}

\begin{thm}\label{thm:main1}
There exists a f.p.\  group $D$ such that $=_D$ is u.e.i.
\end{thm}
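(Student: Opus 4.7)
The plan, sketched immediately before Lemma~\ref{lem:normal-closure}, proceeds in three stages. First, produce a recursively presented group $A$ and a word $w$ such that $([1]_A, [w]_A)$ is effectively inseparable. Second, embed $A$ into a f.p.\ group $D$ in which $w$ remains non-trivial yet $\Ncl_D(w) = D$. Third, apply Lemma~\ref{lem:normal-closure} with $N = D$ to conclude u.e.i.

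For the first stage, I would invoke a result of Miller III (for instance Theorem~\ref{thm:MillerIII}(1) applied to any ceer that possesses an e.i.\ pair of classes) to produce a recursively presented group $A$ together with two words whose $=_A$-classes form an e.i.\ pair. By replacing $([a]_A, [b]_A)$ with the computably isomorphic pair $([1]_A, [a^{-1}b]_A)$, I may assume that one class is $[1]_A$ and the other is $[w]_A$ for a distinguished word $w \in F(X)$.

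For the second stage, the Higman Embedding Theorem supplies a f.p.\ group $B$ containing $A$ as a computable subgroup; writing $w$ still for its image, we have $[1]_A \subseteq [1]_B$ and $[w]_A \subseteq [w]_B$ (disjoint, since the embedding is injective and $w \ne_A 1$), so Remark~\ref{stuff-on-ei} gives that $([1]_B, [w]_B)$ is e.i. A Rabin-type construction next yields a f.p.\ overgroup $D \supseteq B$ in which $w \ne_D 1$ but $\Ncl_D(w) = D$; the standard route is a finite sequence of HNN-extensions that forces every generator of $D$ to lie in the normal closure of $w$, while Britton's Lemma guarantees that $w$ itself remains non-trivial. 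One more application of the inclusion-monotonicity clause of Remark~\ref{stuff-on-ei} then gives that $([1]_D, [w]_D)$ is e.i.

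Finally, in the third stage, Lemma~\ref{lem:normal-closure} applied to $D$ with $N = \Ncl_D(w) = D$ delivers that every pair of distinct $=_D$-classes is e.i.\ uniformly in the representatives, which is exactly the definition of u.e.i. The main obstacle is the Rabin-type embedding of Stage~2: one must construct a f.p.\ supergroup in which the designated element $w$ is simultaneously a normal generator and non-trivial. This is a classical but delicate combinatorial fact; the rest---preservation of effective inseparability across the embeddings, and the promotion from the single pair $([1]_D, [w]_D)$ to all pairs of distinct classes---falls out cleanly from Remark~\ref{stuff-on-ei} and Lemma~\ref{lem:normal-closure}.
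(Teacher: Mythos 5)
Your proposal follows essentially the same route as the paper: obtain a presented group with an e.i.\ pair of classes via a Miller III construction, pass to a pair of the form $([1],[w])$, embed via Higman into a f.p.\ group, apply a Rabin-type construction so that $w\ne_D 1$ while $\Ncl_D(w)=D$, transfer effective inseparability along the embeddings using Remark~\ref{stuff-on-ei}, and conclude with Lemma~\ref{lem:normal-closure}. The one step you defer to the literature --- the Rabin embedding making $w$ a non-trivial normal generator of a f.p.\ overgroup --- is exactly the step the paper itself imports from Lyndon--Schupp (Theorems~IV.3.4 and IV.3.5), so this is not a gap.
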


\begin{proof}
For elements $u,t$ of a group, we write $\Cj(u,t)=t^{-1}ut$. Following
\cite{MillerIII-quotients},  take an e.i.\ pair $(Y_0,Y_1)$ of c.e.\ sets.
Let
$F=F(c,d)$ be the free group on two generators $c,d$; for every $i>0$, let
\[
b_{i-1}=\Cj(\Cj(c,d^{-1}),c^{i})\cdot \Cj(\Cj(\Cj(c^{-1},d),c^{i}),d^{-2}).
\]
Next let
\[
R=\Ncl_{F}(\{b_{0}b_{i}^{-1}: i \in Y_0\} \cup \{b_{1}b_{j}^{-1}: j \in Y_1\}),
\]
and let  $A=\langle c,d; R \rangle$.
Note
that $A$ is   a \emph{computably presented}  group, namely $A$ has a
presentation $\langle Z; T\rangle$ where $Z$ is finite and $T$ is c.e. It can
be shown \cite{MillerIII-quotients} that the computable mapping $i \mapsto
b_i$ provides a reduction
\[
(Y_0,Y_1) \le_1 ([b_0]_A, [b_1]_A).
\]
Hence, by the third item in Remark~\ref{stuff-on-ei}, the pair  $([b_0]_A,
[b_1]_A)$ is e.i. We now follow  a line of argument as in the proof of
Theorem~IV.3.5 of \cite{Lyndon-Schupp}, to which the reader is referred to
fill in the details of the present proof;  the only difference between our
proof and that in~\cite{Lyndon-Schupp} is that we first embed $A$ into a
f.p.\ group $L$, aiming at a final f.p.\ group $D$, whereas in the proof of
Theorem~IV.3.5 of \cite{Lyndon-Schupp} the starting group $C$ is first
embedded into a countable simple group $S$, as the goal in that case is to
end up with a finitely generated simple group. (The construction provided by
Theorem~IV.3.5 of \cite{Lyndon-Schupp} is due to Rabin~\cite{Rabin}; the
version presented in \cite{Lyndon-Schupp} is modelled on
Miller~III~\cite{MillerIII-decision}.)

By the Higman Embedding Theorem (\cite{Higman}; see also
\cite[Theorem~IV.7.1]{Lyndon-Schupp}) the computably presented group $A$ can
be embedded into  a f.p.\ group $L$; next embed, using
\cite[Theorem~IV.3.1]{Lyndon-Schupp},  the free product $L \ast F(x)$ (with
$x$ a new generator) in a f.p.\ group $U$, generated by $u_1$ and $u_2$ both
of infinite order.

In order to build the desired f.p.\ group $D$, we are now going to introduce
additional groups, using two well known combinatorial group theoretic
constructions, namely HNN-extension (where HNN stands for
Higman-Neumann-Neumann), and free product with amalgamation. We briefly
recall these two constructions. If $G=\langle T; Z\rangle$ is a group
presentation, and $\phi:H\rightarrow K$ is an isomorphism between subgroups
of $G$, then the \emph{HNN-extension of $G$, relative to $H,K$ and $\phi$},
is the group $\langle T, p; Z \cup \{p^{-1}h p=\phi(h): h \in H\}\rangle$, of
which $G$ is a subgroup, and $p$  (with $p \notin G$) realizes by conjugation
the given isomorphism; $p$ is called the \emph{stable letter}. It is clear
that one can limit oneself to let the added relations vary on a set of
generators of $H$, instead of adding one relation for each $h \in H$.
Moreover, if $G_{1}=\langle T_{1}; Z_{1}\rangle$, $G_{2}=\langle T_{2};
Z_{2}\rangle$ are group presentations of disjoint groups, with two isomorphic
subgroups $H_{1}, H_{2}$, via isomorphism $\phi: H_{1}\rightarrow H_{2}$,
then their \emph{free product amalgamating $H_1$ and $H_2$ by $\phi$} is the
group $\langle T_{1}\cup T_{2}; Z_{1}\cup Z_{2} \cup \{h=\phi(h): h \in
H_{1}\}\rangle$, which is intuitively the ``freest'' overgroup of both $G_{1}$
and $G_{2}$ in which their subgroups are identified. Again,  it is clear that
one can limit oneself to let the added relations vary on a set of generators
of $H_{1}$, instead of adding one relation for each $h \in H_{1}$. For more
on these constructions, see \cite{Lyndon-Schupp}.

Consider the groups
\begin{align*}
J&=\langle U, y_1, y_2; y_1^{-1}u_1 y_1=u_1^2, y_2^{-1}u_2 y_2=u_2^2\rangle,\\
K&= \langle J,z; z^{-1}y_1z=y_1^{2}, z^{-1}y_2z=y_2^{2}\rangle,\\
P&=\langle r,s; s^{-1}rs=r^{2}\rangle,\\
Q&=\langle r,s,t; s^{-1}rs=r^{2}, t^{-1}st=s^{2}\rangle.\\
\end{align*}
The group $J$ is the (double) HNN-extension of $U$ with stable letters
$y_{1}, y_{2}$, where for each $i\in \{1,2\}$, $y_{i}$ realizes by
conjugation the isomorphism induced by $u_{i} \mapsto u_{i}^{2}$, between the
subgroups generated by $u_{i}$, and  by $u_{i}^{2}$, respectively; $K$ is the
HNN-extension of $J$, with stable letter $z$, realizing by conjugation the
isomorphism induced by $y_{1} \mapsto y_{1}^{2}$ and $y_{2} \mapsto
y_{2}^{2}$, between the subgroups generated by $y_{1}, y_{2}$,  and by
$y_{1}^{2}, y_{2}^{2}$, respectively; $P$ is the HNN-extension of $F(r)$,
with stable letter $s$, realizing by conjugation the isomorphism  induced by
$r \mapsto r^{2}$, between the subgroups generated by $r$, and  by $r^{2}$,
respectively; $Q$ is the HNN-extension of $P$, with stable letter $t$,
realizing by conjugation the isomorphism induced by $s \mapsto s^{2}$,
between the subgroups generated by $s$, and by $s^{2}$, respectively. It is
shown in the proof of \cite[Theorem~IV.3.4]{Lyndon-Schupp} that $r,t$ freely
generate a   subgroup of $Q$. Let $w \in L$, with $w \ne_L 1$: since the
commutator $[w,x]$ has infinite order in $U$, an argument similar to the one
used for $r,t$, and $Q$ (see again \cite{Lyndon-Schupp}) shows that $z$ and 
$[w,x]$ freely generate a  subgroup of $K$. Thus, one can form the free
product with amalgamation
\[
D=\langle K \ast Q; r=z, t=[w,x]\rangle.
\]
All  groups mentioned are finitely presented except for $A$.
We summarize in the following diagram the chains of embeddings provided by the constructions:
\[
\begin{CD}
A @>>> L @>>> L \ast F(x) @>>> U @>>> J @>>> K\\
   @.           @.                         @.        @.         @.     @VVV\\
   @.          @. F(r)            @>>> P @>>> Q @>>> D.
\end{CD}
\]
As pointed out in the proof of \cite[Theorem~IV.3.4]{Lyndon-Schupp}, if $N
\lhd D$ and $w \in N$, then $w=1$ in the quotient $D/N$. Then $[w,x]=1$ in
this quotient. Using the relators, we conclude that $t=1$, $s=1$, $r=1$,
$z=1$, $y_1=1$, $y_2=1$, $u_1=1$ and $u_2=1$. Therefore the quotient is
trivial, and hence $N=D$.

Keeping track of the images of the generators $c,d$ of $A$ into $D$, under
the chain of embeddings leading from $A$ to $D$, one sees that there is a
computable function $k$ from $F(c,d)$ into $F(X)$, where $X$ is the set of
generators of $D$ in the exhibited presentation of $D$, inducing the embedding
of $A$ into $D$. Let us identify
$k(a)$ with $a$, for all $a \in F(c,d)$. Since, under this identification, $b_{0}\neq_{D}
b_{1}$,  $[b_0]_A \subseteq [b_0]_D, [b_1]_A \subseteq  [b_1]_D$, and $([b_0]_A,
[b_1]_A)$ is e.i.,  it follows that  $([b_0]_D, [b_1]_D)$ is e.i.\ by the
last item in Remark~\ref{stuff-on-ei}. Let $w=b_1^{-1}b_0$: then
$w \ne_D 1$, the pair $([1]_D, [w]_D)$ is e.i., and by
Lemma~\ref{lem:normal-closure} the normal closure $N=\Ncl_D(w)$ satisfies
the property that all pairs $([s]_{D}, [t]_{D})$ of disjoint equivalence
classes of $N$ are e.i., uniformly in $s,t$. Since $w \in N$, it follows that
$N=D$. Therefore $D$ is a f.p.\ group with u.e.i.\ word problem.
\end{proof}

\section{Diagonal functions}
A \emph{diagonal} function for an equivalence relation $E$ is a computable
function $\delta$ such that $a \cancel{\rel{E}} \delta(a)$, for all $a$. In
this section we apply diagonal functions to   ceers arising from group
theory, and pose some related open questions. Following~\cite{Montagna:ufp},
a ceer $E$ is \emph{uniformly finitely precomplete}  if there exists a
computable function $f(D,e,x)$ such that 
\[
\phi_e(x)\downarrow \in [D]_E \Rightarrow f(D,e,x) \rel{E} \phi_e(x),
\]
for all $D,e,x$, where $D$ is a finite set and
$[D]_E$ denotes the $E$-closure of $D$.  (Here, and in the following, when given as an input to a computable
function, a finite set will be always identified with its canonical index.) An important example of a uniformly
finitely precomplete ceer is provable equivalence in Peano Arithmetic, i.e.\
the ceer $\sim_{PA}$ defined by $\ulcorner \sigma \urcorner \sim_{PA}
\ulcorner \tau \urcorner$ if and only if $\vdash_{PA} \sigma
\leftrightarrow \tau$.  Here $\sigma, \tau$ are sentences of $PA$, and we
refer to some computable bijection $\ulcorner \mbox{} \urcorner$ of the set
of sentences with $\omega$. A diagonal function is given by $\delta(\sigma) =
\neg \sigma$.

Ceers $E$ and $F$ are  called \emph{computably isomorphic} if there exists a
computable permutation $p$ of $\omega$ such that $p(E) = F$. The notions of a
diagonal function and a uniformly finitely precomplete  ceer play an
important role in the study and classification of $\Sigma^{0}_{1}$-universal
ceers.

\begin{prop}[\cite{Montagna:ufp}]
(i) Every uniformly finitely precomplete ceer is u.e.i.

\noindent (ii) A ceer $E$ is computably isomorphic to $\sim_{PA}$ if and only
if $E$ is uniformly finitely precomplete and $E$ has a diagonal function.
\end{prop}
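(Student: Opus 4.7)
For part (i), the plan is to apply the Recursion Theorem in conjunction with the u.f.p.\ witness $f$. Fix $a,b$ with $a\cancel{\rel{E}}b$, and suppose $W_u,W_v$ are c.e.\ sets with $[a]_E\subseteq W_u$, $[b]_E\subseteq W_v$ and $W_u\cap W_v=\emptyset$. By $s$-$m$-$n$ and the Recursion Theorem I produce, uniformly in $a,b,u,v$, an index $e$ for the partial computable function that first computes $y:=f(\{a,b\},e,0)$ and then dovetails the enumerations of $W_u,W_v$, outputting $b$ if $y$ appears in $W_u$ first and $a$ if $y$ appears in $W_v$ first. If $y\in W_u$, then $\phi_e(0)=b\in[\{a,b\}]_E$, so u.f.p.\ forces $y\rel{E} b$, placing $y\in W_v$ and contradicting $W_u\cap W_v=\emptyset$; the case $y\in W_v$ is symmetric. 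Hence $y\notin W_u\cup W_v$, and the map $(u,v)\mapsto y$ is the desired productive function, with index computable in $a,b$ as required.

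For part (ii), the direction $(\Rightarrow)$ is easy: $\sim_{PA}$ is uniformly finitely precomplete by a standard application of the G\"odel strong fixed-point lemma (given a finite set $D$ of sentences and an index $e$, produce a sentence $\tau$ with $\vdash_{PA}\tau\leftrightarrow\phi_e(\ulcorner\tau\urcorner)$ whenever $\phi_e(\ulcorner\tau\urcorner)\in[D]_{\sim_{PA}}$) and $\sigma\mapsto\neg\sigma$ is visibly a diagonal function; both properties are preserved under computable isomorphism.

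For the direction $(\Leftarrow)$ I would derive the more general statement (Montagna) that any two u.f.p.\ ceers $E,F$ with diagonal functions $\delta_E,\delta_F$ are computably isomorphic, and instantiate with $F=\sim_{PA}$. The principal tool is a \emph{fresh-class lemma}: from any finite $D\subset\omega$ one effectively obtains a representative of an $F$-class disjoint from $[D]_F$. This is established by using the Recursion Theorem to pick $e$ with $\phi_e(0)=\delta_F(f(D,e,0))$; if $\phi_e(0)\in[D]_F$, then u.f.p.\ forces $f(D,e,0)\rel{F}\delta_F(f(D,e,0))$, contradicting the diagonal property, so $\delta_F(f(D,e,0))\notin[D]_F$ and is computable in $D$. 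The isomorphism is then constructed by a Myhill-style back-and-forth: u.f.p.\ of the target side provides effective placement of images into prescribed classes, while iterated uses of the fresh-class lemma secure fresh classes at every step. The main obstacle is that $E$- and $F$-relatedness are merely c.e., so the construction must be organized so that relatedness discovered after commitment is absorbed into the classes already chosen rather than forcing retraction; this is achieved by mapping each new point via $f$ with the finite set $D$ of already-committed representatives as first argument, so that later-observed relations automatically align through the precompleteness property.
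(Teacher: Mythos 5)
The paper does not prove this proposition at all: it is imported from Montagna's paper with a citation only, so there is no in-paper argument to compare yours against. Judged on its own, your part (i) is a complete and correct proof, and it is the standard one: the Recursion Theorem produces the self-referential index $e$, the u.f.p.\ function evaluated at $(\{a,b\},e,0)$ yields the candidate $y$, and each of the cases $y\in W_u$, $y\in W_v$ forces $y$ into the opposite set via the precompleteness property, contradicting disjointness; uniformity in $a,b$ follows from $s$-$m$-$n$ together with the parametrized Recursion Theorem. Your fresh-class lemma in part (ii) is likewise correct --- it is exactly the observation, quoted in Section 4 of the paper from Andrews and Sorbi, that u.f.p.\ plus a diagonal function yields a strong diagonal function.

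Two genuine gaps remain in part (ii). First, the claim that $\sim_{PA}$ is uniformly finitely precomplete is not a one-line application of the fixed-point lemma as you state it: $\phi_e$ is partial while $f(D,e,x)$ must be total, so the fixed point cannot literally be a sentence $\tau$ with $\vdash_{PA}\tau\leftrightarrow\phi_e(\ulcorner\tau\urcorner)$; one needs a Rosser-style witness-comparison sentence (or the formalized recursion theorem) so that $\tau$ always exists and is provably equivalent to the output only under the hypothesis that the computation converges to a sentence in $[D]_{\sim_{PA}}$. This is a substantive theorem in Montagna's paper, not a remark. Second, the back-and-forth for the direction $(\Leftarrow)$ is a plan rather than a proof: the difficulty is not only that $E$- and $F$-relatedness are discovered late, but that you must produce a single computable permutation of $\omega$, so exact injectivity and surjectivity must be maintained while classes on both sides merge; the u.f.p.\ function places images into prescribed classes but gives no control over which representative it returns, so collisions with earlier commitments must be repaired, and that repair mechanism is the technical heart of Montagna's argument. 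Your sketch names the right ingredients but does not carry out this step.
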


A  \emph{strong diagonal} function for an equivalence relation $E$ is a
computable function $\delta$ such that $\delta(D) \notin [D]_E$, for every
finite set $D$.  Andrews and Sorbi~\cite{jumpsofceers} have shown   that
every u.e.i.\ ceer with a strong diagonal function is uniformly finitely
precomplete, and therefore computably isomorphic to $\sim_{PA}$.

Suppose a f.p.\ group $G = \langle X; R \rangle$ is nontrivial, say $w\ne_G
1$ for some $w \in F(X)$. Then $=_{G}$ has a diagonal function, namely the
map $\delta(r) = rw$ ($ r \in F(x)$). It would be interesting to prove that
there exists a f.p.\ group $G$ such that $=_G$ is uniformly finitely
precomplete, for this would yield an example of a word problem of a f.p.\
group which is computably isomorphic to $\sim_{PA}$. To show this, one can
try to strengthen Theorem~\ref{thm:main1} to provide a f.p.\ group $G$ such
that  $=_G$ is u.f.p., or, equivalently, to extend its proof in order to
provide  a f.p.\ group $G$ such that $=_G$ is u.e.i.\  and $G$ has a strong
diagonal function. Thereafter one can  use the above-mentioned result of
Andrews and Sorbi~\cite{jumpsofceers}. We do not know at present how to do
carry out this plan.

\begin{prop}
The isomorphism problem  $\cong_{f.p.}$ between finite presentations of
groups has a strong diagonal function.
\end{prop}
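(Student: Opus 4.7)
The plan is to build the strong diagonal function $\delta$ by using the fact that, although $\cong_{f.p.}$ is undecidable, the \emph{abelianization} of a finite presentation is an effectively computable isomorphism invariant. Given a presentation $P = \langle x_1, \ldots, x_n; r_1, \ldots, r_m\rangle$, one forms the $m \times n$ integer matrix $M$ whose $(i,j)$-entry is the exponent sum of $x_j$ in $r_i$, and computes its Smith normal form to obtain a canonical description $P^{ab} \cong \mathbb{Z}^s \oplus \mathbb{Z}/d_1\mathbb{Z} \oplus \cdots \oplus \mathbb{Z}/d_\ell\mathbb{Z}$. In particular, the order $|P^{ab}| \in \omega \cup \{\infty\}$ is uniformly computable from a code of $P$.

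On input a finite set $D = \{P_1, \ldots, P_k\}$, the function $\delta$ first computes the abelianizations $A_i = P_i^{ab}$ and their orders $n_i = |A_i|$, with the convention $n_i = 0$ when $A_i$ is infinite. It then chooses an integer $m$ strictly larger than $\max\{n_1, \ldots, n_k\}$, for instance $m = 1 + \max\{n_1, \ldots, n_k\}$, and outputs $\delta(D) = \langle a; a^m\rangle$, the standard one-relator presentation of the cyclic group $\mathbb{Z}/m\mathbb{Z}$. Since $(\delta(D))^{ab} \cong \mathbb{Z}/m\mathbb{Z}$ has finite order $m$, which differs from every $n_i$, isomorphism invariance of abelianization yields $\delta(D) \not\cong_{f.p.} P_i$ for every $i$, i.e.\ $\delta(D) \notin [D]_{\cong_{f.p.}}$, as required.

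The argument poses no real obstacle, relying only on the effectivity of the Smith normal form computation, the isomorphism invariance of abelianization, and the availability of arbitrarily large integers. The same strategy would work with any computable family of pairwise non-isomorphic f.p.\ groups that is distinguished by a computable invariant; using cyclic groups of distinct orders, distinguished by the order of the abelianization, is simply the most direct choice.
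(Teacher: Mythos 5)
Your proof is correct, and it rests on the same key fact as the paper's: abelianization is an effectively computable, isomorphism-invariant operation on finite presentations. The implementations differ, though. The paper outputs a presentation of $\mathds{Z}\times\prod_{u}(G_u)_{ab}$ and distinguishes it from each $G_u$ by torsion-free rank (non-abelian $G_u$ are excluded automatically because the output is abelian); this needs no normal-form computation at all, since the product presentation is written down directly from the input presentations. You instead extract a numerical invariant --- the order of the abelianization, computed via Smith normal form --- and output a finite cyclic group whose order exceeds every finite value encountered; presentations with infinite abelianization are excluded because your output is finite. Your route costs a little more computation but produces a smaller and arguably more concrete witness, and it isolates cleanly the general principle you state at the end (any computable invariant taking infinitely many values would do). One cosmetic point: with your convention $n_i=0$ for infinite $A_i$, the literal assertion that ``$m$ differs from every $n_i$'' is a statement about the coded values rather than the orders; the comparison that actually does the work in that case is $|\delta(D)^{ab}|=m<\infty$ versus $|A_i|=\infty$, which of course still differ. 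No gap results from this.
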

\begin{proof}
Uniformly in  a finite presentation $G = \langle x_1, \ldots, x_n ; r_1,
\ldots, r_k\rangle$,  the abelianization $G_{ab}$ has the finite presentation
\[
G_{ab} = \langle x_1, \ldots, x_n ; r_1, \ldots, r_k, [x_i,x_j]:
1 \le i<j \le n \rangle,
\]
where $[u,v]=u^{-1}v^{-1}uv$ is the usual commutator of $u,v$.
Given a finite set $S = \{G_1, \ldots, G_r\}$ of finite presentations, let
$\delta(S)$ be the canonical finite presentation of  the abelian group
$H=\mathds{Z} \times \prod_{1\le u \le r} (G_u)_{ab}$. Then $H \not \cong G_u$
for each $u$. For, if $G_u$ is abelian, then the torsion free rank of $H$
exceeds that of $G_u$.

We note that, via a   less elementary method involving  the Grushko-Neumann
Theorem (see~\cite[p.\ 178]{Lyndon-Schupp}), one could also  simply let  $H$
be the amalgam of $\mathds{Z}$ and all the $G_u$.
\end{proof}
We conjecture that $\cong_{f.p.}$ is uniformly finitely precomplete, and
hence computably isomorphic to $\sim_{PA}$.  In view of the foregoing
proposition it would suffice to show that the ceer $\cong_{f.p.}$ is  u.e.i.
By a result of Rabin~\cite{Rabin},  every equivalence class of $\cong_{f.p.}$ is creative;
see also \cite[p.\ 79]{MillerIII-decision}.

\section{$\Pi^{0}_{1}$-universality and groups of computable permutations}
We use the following notation: the product $\alpha\beta$ of two permutations
on some set $S$  is the permutation $\alpha\beta(s)=\beta(\alpha(s))$ where
$s \in S$.

\begin{thm}\label{thm:main2}
 There is a f.g.\ group of computable permutations  with a
$\Pi^{0}_{1}$-universal word problem.
\end{thm}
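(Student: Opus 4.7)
The plan is to reduce a $\Pi^0_1$-universal equivalence relation $U$ to the word problem of a suitably constructed finitely generated group $G$ of computable permutations of $\omega$. Such a $U$ exists by \cite[Theorem~3.5]{Ianovski-et-al}; since the word problem of every f.g.\ group of computable permutations is itself $\Pi^0_1$, any computable reduction $U \le\, =_G$ immediately yields the required $\Pi^0_1$-universality.

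Let $E = \omega^2 \smallsetminus U$, a c.e.\ set, and fix a computable enumeration $E = \bigcup_s E_s$ in which one new pair $(n_s,m_s)$ enters at each stage $s$. I would build the generators $a_1,\ldots,a_k$ of $G$ in stages as a uniformly computable sequence of partial injections on $\omega$, extended to larger finite domains at each step, with the property that each $a_i$ is eventually total on $\omega$ and hence a computable permutation. Simultaneously I would fix, at the outset, a uniformly computable sequence of words $(w_n)_{n\in\omega}$ in $F(a_1,\ldots,a_k)$ playing the role of the reduction $n \mapsto w_n$; a natural candidate has the form $w_n = g_n\, c\, g_n^{-1}$, where $(g_n)_n$ is an ``addressing'' sequence (for example, shift-based conjugations) and $c$ is a fixed ``marker'' generator.

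To organize the construction, I would partition $\omega$ into pairwise disjoint infinite computable ``blocks'' $B_{n,m}$ (for $n<m$) together with a ``bulk'' region on which the generators act in a default way that makes all $w_n$ agree. At stage $s$, if $(n,m) = (n_s,m_s)$ is the newly enumerated pair, I would pick a fresh element $x \in B_{n,m}$ outside the current finite partial domains of all generators, and extend the $a_i$'s on $x$ and on a finite ``computation path'' needed to evaluate $w_n(x)$ and $w_m(x)$, so as to force $w_n(x) \ne w_m(x)$, while arranging that $w_p(x) = w_q(x)$ for every other pair $(p,q)$. The block-indexed structure ensures that diagonalizations for distinct pairs do not interfere, and round-robin extensions of each $a_i$ guarantee totality in the limit.

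The main obstacle is the preservation step: one must guarantee that diagonalizing against $(n_s,m_s)$ on $B_{n_s,m_s}$ does not inadvertently separate $w_p$ and $w_q$ for some pair $(p,q) \notin E$. This is handled by choosing the addressing sequence $(g_n)_n$ so that each $g_n$ points to a sub-region of $B_{n,m}$ that only $w_n$ and $w_m$ visit when evaluated on $x$. A standard finite-injury priority argument copes with infinitely many requirements simultaneously. Once the construction is complete, the $a_i$'s generate a f.g.\ subgroup $G \le S_\omega$ of computable permutations, and the map $n \mapsto w_n$ yields $n\,U\,m \iff w_n =_G w_m$, so $=_G$ is $\Pi^0_1$-universal.
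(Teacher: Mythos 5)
There is a genuine gap, and it sits exactly where you flag the ``main obstacle.'' Your construction must satisfy two kinds of requirements: the negative ones ($w_n \ne_G w_m$ when $(n,m)$ enters $E=\omega^2\smallsetminus U$), which your stagewise diagonalization handles, and the positive ones ($w_p =_G w_q$ whenever $p \mathrel{U} q$), which it cannot. At the stage when $(n,m)$ enters $E$ you pick a point $x$ and commit to the values $w_p(x)$ for \emph{every} $p$ (these are determined once the generators are defined near $x$, and permutation values, once assigned, can never be revised). For correctness in the limit, the function $p \mapsto w_p(x)$ must be constant on $U$-classes while separating the class of $n$ from the class of $m$. But the $U$-classes are $\Pi^0_1$ objects that are not known at any finite stage: if you set, say, $w_n(x) \ne x$ and $w_p(x) = x$ for all $p \ne n$, then every $q$ with $q \mathrel{U} n$, $q\ne n$, is wrongly separated from $n$, and this damage is permanent. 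A finite-injury argument does not rescue this, because the injured requirements are global equalities of permutations that cannot be restored after a single point of disagreement has been hard-coded; the block structure and the choice of addressing words $g_n$ only control \emph{where} the damage occurs, not the fact that some assignment of values to all $p$ at the point $x$ must be made without knowing the partition into $U$-classes.

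The paper sidesteps this entirely by first invoking the normal form for $\Pi^0_1$ equivalence relations (\cite[Prop.~3.1]{Ianovski-et-al}): there is a computable $f$ with $x \mathrel{E} y \Leftrightarrow \forall n\, [f(x,n)=f(y,n)]$ and $f(x,n)\le x$. The maps $x \mapsto f(x,n)$ are computable and automatically constant on $E$-classes, so they supply exactly the computable class-invariant data your construction would need to invent. The function $f$ is then coded \emph{statically} into a single permutation $\alpha$ (a cycle of length $f(x,n)$ in a designated interval of column $C^0_x$), together with a shift $\sigma$ and a column swap $\tau$, and the words $t_x = \Cj(\alpha,\sigma^{-x})\tau\Cj(\alpha^{-1},\sigma^{-x})$ extract precisely the sequence $(f(x,n))_n$; no stages, no priority argument, and no totality-in-the-limit issues arise. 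To repair your proof you would have to import this device (or something equivalent), at which point the construction essentially becomes the paper's.
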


\begin{proof}
Given a $\Pi^0_1$ equivalence relation $E$, by \cite[Prop.\
3.1]{Ianovski-et-al} there is a computable binary function $f$ such
that
\[
x\rel{E}y \Leftrightarrow (\forall n) [f(x,n) = f(y,n)].
\]
The construction of $f$
shows that   $f(x,n)\le x$ for each $x,n$.

Fix now a $\Pi_{1}^{0}$-universal equivalence relation $E$ 
(for the existence of such an equivalence relation see~\cite{Ianovski-et-al}) and a corresponding function
$f$ as above. Via  a computable bijection  we identify $\mathds{Z} \times \omega$ with
$\omega$. We think of the domain of our computable permutations as a
disjoint union of pairs of ``columns" $$C^i_x = \{ 2x+i\} \times \omega,$$
where $i =0,1$, $x \in \mathds{Z}$ for the rest of this proof.

The first two of the three computable permutations $\sigma, \tau, \alpha$ we
are  about to define do not depend at all on $f$. The permutation $\sigma$
shifts $\C i x $ to $\C i {x+1}$:
\[
\sss (\la 2x+i, n \ra) = \la 2x+2+i, n \ra.
\]
The permutation $\ttt$ exchanges $\C i 0 $ with  $\C {1-i} 0$ and is the
identity elsewhere:
\[
\ttt (\la  i, n \ra) = \la  1-i, n \ra.
\]
We now define a computable permutation $\aaa$ coding $f$ in the sense that
there exists a fixed computable sequence $(t_x(\alpha, \sigma,\tau))_{x \in
\omega}$ of words in the free group generated by the symbols $\aaa, \sss, \ttt$, such that
for each $x,y \in \omega$,
\begin{equation} \label{eqn: code}
 \forall n \, f(x,n) = f(y,n)  \Leftrightarrow t_x
= t_y,
\end{equation}
where equality $t_x= t_y$ is in the group
generated by the three permutations.
For each $x, n$, the permutation $\aaa$ has a cycle of length $f(x,n)$ in the
interval $n(x+1), \ldots, (n+1)(x+1) -1$ of $C_{x}^{0}$. Thus, for each $x,n
\in \omega$ and $k \le x$,
\[
\aaa(\la 2x, n(x+1) +k \ra) =
\begin{cases}    \la 2x, n(x+1) + k+1 \ra & \mbox{if} \ k< f(x,n) \\
					  \la 2x, n(x+1)  \ra & \mbox{if} \ k= f(x,n) \\
					  \la 2x, n(x+1) +k  \ra & \mbox{otherwise,}
\end{cases}
\]
and $\alpha$ is the identity on the remaining columns.
We now define the terms $t_x$ for $x \in \omega$. The permutation $t_x
(\aaa, \sss, \ttt)$ will only retain the encoding of the values $f(x,n)$, and
erase all other information. It also moves this information to the pair of
columns $\C 0 0, \C 10$. In this way  we can compare the values $f(x,n)$ and
$f(y,n)$ applying  $t_x$ and $t_y$ to $\aaa, \sss, \tau$.

Recall that for elements $u,t$ of a group we write $\Cj(u,t)=t^{-1}ut$.
We let
\[
t_x=\Cj(\alpha, \sigma^{-x})\tau \Cj(\alpha^{-1}, \sigma^{-x}).
\]
Let $\alpha_x$   be the permutation    given by $\alpha(\langle 2x,
y\rangle)=\langle 2x, \alpha_x(y)\rangle$. Using that everything cancels
except what $\alpha$ codes on the column $C^0_x$,  we obtain
\[
t_x(\langle u,y\rangle)=
\begin{cases}
\langle u,y \rangle, &\text{if $u\ne 0,1$},\\
\langle 1, \alpha_x(y)\rangle, &\text{if $u=0$},\\
\langle 0, (\alpha_x)^{-1}(y)\rangle, &\text{if $u=1$}.
\end{cases}
\]
By the definition of $\aaa$ it is now clear that (\ref{eqn: code}) is
satisfied, and thus our $\Pi^{0}_{1}$-universal $E$ is reducible to
the word problem of $G$.
\end{proof}
In the area of  computational complexity, one writes input numbers in binary
and considers time bounds compared to their length. A quadratic time variant
$G$ of the function $f$  encoding the equivalence relation $E$ is obtained in
\cite[Theorem~3.5]{Ianovski-et-al}.  Some modifications to the proof above
yield three permutations that are polynomial time computable, as are their
inverses, and they  still  generate a group with $\Pi^0_1$-universal word
problem.

Independently Fridman~\cite{Fridman1962}, Clapham~\cite{clapham1964finitely}
and Boone~\cite{Boone1966a,Boone1966b,Boone1971} proved that each c.e.\
Turing degree contains the word problem of a f.p.\ group. (Here and
throughout next theorem and its proof, ``word problem'' is meant classically
as the equivalence class of the identity element). Later
Collins~\cite{Collins-tt-1971} extended this to c.e.\ truth table degrees. In
contrast, Ziegler~\cite{Ziegler1976}  constructed a bounded truth-table
degree that does not contain the word problem of a f.p.\ group. For f.g.
groups with $\Pi^{0}_{1}$ word problem, Morozov~\cite{morozov2000once} has
shown that there is a two-generator group which is not embeddable into  the
group of computable permutations of $\omega$.

 Using the
methods of the foregoing result, here we obtain an analog of the results by
Fridman, Clapham, Boone and Collins  for f.g.\ groups of computable
permutations. In fact we can choose the permutations of a special kind.

Let us call a permutation $\sss$ \emph{fully primitive recursive} if both
$\sigma $ and $\sigma^{-1}$ are primitive recursive. Note that the fully
primitive recursive permutations form a group.

\begin{thm} \label{th:truth table}
Given a $\Pi^0_1$ set $S$ we can effectively build   fully primitive recursive
permutations $\beta, \sss, \ttt$ such that the group $G$ generated by them
has word problem in the same truth-table degree as~$S$.
\end{thm}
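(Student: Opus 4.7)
The plan is to adapt the proof of Theorem~\ref{thm:main2}, now encoding the specific $\Pi^0_1$ set $S$ (rather than a $\Pi^0_1$-universal equivalence relation) into the generator $\beta$. Fix a primitive recursive predicate $R(x,s)$, non-increasing in $s$, with $x \in S \iff \forall s\, R(x,s)$, and for $x \in S^c$ write $s(x) = \min\{s : \lnot R(x,s)\}$. Identifying $\omega$ with $\mathds{Z} \times \omega$ with the columns $C^i_x = \{2x+i\} \times \omega$, keep $\sigma$ and $\tau$ exactly as in Theorem~\ref{thm:main2}. Define $\beta$ as the fully primitive recursive involution which, for each $x \in S^c$, inserts the single transposition exchanging $\langle 2x, 2s(x)\rangle$ with $\langle 2x, 2s(x)+1\rangle$, and is the identity elsewhere. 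Setting $t_x = \Cj(\beta, \sigma^{-x})\,\tau\,\Cj(\beta^{-1}, \sigma^{-x})$ as in the preceding proof, the same calculation shows $t_x = t_y$ as permutations iff the actions of $\beta$ on $C^0_x$ and $C^0_y$ coincide. Fixing some $x_0 \in S$, the map $x \mapsto t_x t_{x_0}^{-1}$ is then a computable $1$-reduction from $S$ to $[1]_G$, so $S \le_{tt} [1]_G$.

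For the reverse direction $[1]_G \le_{tt} S$, given a word $w$ of length $L$ in the generators, the plan is to produce effectively from $w$ a finite list of $S$-queries and a Boolean formula on them equivalent to $w = 1_G$. A purely syntactic pre-check eliminates easy failures: if the net $\sigma$-exponent of $w$ is nonzero, $w$ cannot fix points in far columns, so $w \ne 1_G$. Otherwise, trace $w$ on a generic point $\langle 2x+i, m\rangle$: the visited columns stay within $[x-L, x+L]$, the $\tau$-letters are inert for $|x| > L$, and the effect on the $m$-coordinate is a composition of at most $L$ candidate $\beta^{\pm 1}$-transpositions. The candidate swap contributed by a $\beta^{\pm 1}$-letter at column-offset $d$ in the trace is nontrivial precisely when $x + d \in S^c$ and the current $m$ meets the two-element support of $\beta$ on $C^0_{x+d}$. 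Since transpositions on distinct columns sit on disjoint pairs and hence commute, the condition ``$w$ fixes every point of $C^0_x$'' should reduce to: for each syntactic offset $d$ with an odd tally $n_d$ of $\beta^{\pm 1}$-letters, the query ``$x + d \in S$'', augmented by a direct finite check for $|x| \le L$ using finitely many $S$-queries in a bounded window.

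The main obstacle is that this far-column condition naively quantifies over all $x$ with $|x| > L$ and therefore reads as a $\Pi^0_1$-in-$S$ statement rather than as a Boolean combination of finitely many $S$-queries. To obtain genuine tt-reducibility the construction must be refined so that, whenever the syntactic odd-offset set $D = \{d : n_d \text{ odd}\}$ is nonempty, one can certify $w \ne 1_G$ without consulting $S$ at all. A natural way is to pad $\beta$ with a fixed $S$-independent transposition on a column of negative index, so that $\beta$ is never the identity permutation, and to arrange $G$ so that the only identities among $\beta^{\pm 1}, \sigma^{\pm 1}, \tau$-words arise from syntactically recognisable cancellations of the $t_x$-style terms---whose finitely many $x$-indices then constitute the list of $S$-queries actually needed. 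Pinning down this refinement, so that $G$'s word problem depends on $S$ only through a $w$-computable finite set of queries, is the crux of the argument and is precisely what distinguishes this theorem, with its control of the tt-degree, from Theorem~\ref{thm:main2}, which only controls universality within $\Pi^0_1$.
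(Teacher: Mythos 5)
Your first direction is fine, and you have correctly located the crux: deciding $w=1_G$ must not require checking infinitely many columns, yet a word with an odd tally of $\beta$-letters at some offset naively forces exactly such an infinite check. But your proposal stops at naming this obstacle, and the padding idea you sketch (one fixed $S$-independent transposition) does not resolve it. Worse, your choice of $\beta$ creates a second problem you do not address: placing the transposition of column $x$ at height $2s(x)$ means that for $x\ne x'$ the shifted involutions $\Cj(\beta,\sigma^{-x})$ and $\Cj(\beta,\sigma^{-x'})$ can have overlapping, even identical, two-point supports on a common column (whenever $s(c+x)=s(c+x')$), so a product such as $\Cj(\beta,\sigma^{-x})\Cj(\beta,\sigma^{-x'})$ may collapse on some columns and not on others, and whether it collapses everywhere is again a condition quantified over all columns. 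Keeping $\tau$ as the full swap of $C^0_0$ with $C^1_0$ also deprives you of the normal form needed to reduce a general word to a column-by-column analysis.

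The paper removes both difficulties by a different coding. First, $S^c$ is replaced (up to $m$-equivalence, after being made infinite) by the range of a $1$-$1$ function $g$ with primitive recursive graph, and $\beta$ codes the \emph{graph} of $g$: if $g(t)=x$ then $\beta$ has the $2$-cycle $(\langle x,3t\rangle,\langle x,3t+1\rangle)$. Injectivity of $g$ makes the supports of the shifts $\beta_x=\Cj(\beta,\sigma^{-x})$ pairwise disjoint on every column, and infiniteness of $\ran(g)$ makes every $\beta_x$ a nontrivial involution. Second, $\tau$ is taken with small support, namely the $2$-cycles $(\langle 0,3t+1\rangle,\langle 0,3t+2\rangle)$, interleaved mod $3$ with the $2$-cycles of $\beta$ so that $\beta_x\tau_u$ creates a $3$-cycle on column $-u$ exactly when $x-u\in S^c$. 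With these choices, a word of zero $\sigma$-exponent can be rewritten as an alternating product of commuting blocks $\prod\beta_x$, $\prod\tau_u$ with $x\in L$, $u\in M$ finite; on the infinitely many columns $C_{-u}$ with $u\notin M$, triviality is a pure parity condition on the occurrences of each $\beta_x$ (no oracle needed, precisely because each $\beta_x\ne 1$ and the supports are disjoint), and on the finitely many columns $C_{-u}$ with $u\in M$, triviality is a Boolean combination of the queries $x-u\in S^c$ for $(x,u)\in L\times M$. That is the tt-reduction your argument is missing.
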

\begin{proof}
In this proof we   work with  an array of  columns indexed by integers.  Let
$\sss (\la x, n \ra) = \la x+1, n \ra$  ($x \in \mathds{Z}, n \in \omega$)
be the shift to the next column. Let $\tau$ consist of the 2-cycles  $(\la 0,
3t +1 \ra ,  \la 0, 3t+2 \ra)$ for each $t$: in other words, $\tau(\la 0,
3t +1 \ra) =\la 0, 3t+2 \ra$, $\tau(\la 0,
3t +2 \ra) =\la 0, 3t+1 \ra$ for all $t$, and $\tau$ is the identity elsewhere.

Let $S$ be a given $\Pi^{0}_{1}$ set, and let $S^{c}= \omega \smallsetminus
S$ be the complement of $S$. First we show we may assume that, up to
$m$-equivalence,  $S^{c}$  is the range of a  $1$-$1$ function with graph
effectively given  by an index for a  primitive recursive relation.  We can
uniformly replace $S^{c}$ by $\{2n \colon \, n \in  S^{c}\}  \cup \{2n+1
\colon n \in \omega\}$, so we may assume that $S^{c}$ is infinite. From a
c.e.\ index for $S^{c}$ we may effectively obtain an index $e$  of a Turing
machine  that computes a  $1$-$1$   function $f$ with range $S^{c}$. Thus,
for all $x$ we have $f(x)=U(\mu y. \,T(e,x,y))$, where $U$ and $T$ are
respectively a primitive recursive function and a primitive recursive
predicate as in the Kleene Normal Form Theorem. Consider the primitive
recursive predicate  $P(e,x,y)$, which holds if and only if $T(e,x,y) \,\&\,
\forall z < y \, [\neg T(e,x,z)]$.  Using  the standard primitive recursive
pairing function $\la . \,, . \ra$, let $g (\la x, y \ra) = 2 U(y)$ if
$P(e,x,y)$ holds, and  $g (\la x, y \ra) = 2 \la x, y \ra + 1$ otherwise.
Clearly $g$ is a $1$-$1$ function with primitive recursive graph. The range
of $g$ is $\{2n \colon \, n \in  S^{c}\} \cup \{2 \la x, y \ra + 1\colon \,
\neg P(e,x,y)\}$, which is   $m$-equivalent to $S^{c}$.

Next we code the graph of $g$  into a fully primitive recursive permutation
$\beta$ as follows:  if $g(t) = x$, then $\beta $ has  a 2-cycle $(\la x, 3t
\ra, \la x, 3t+1 \ra)$. Thus, among the three permutations only $\beta$
depends on $S$.  Clearly $\beta $ is fully primitive recursive uniformly in a
c.e.\ index for $S^c$.

Let $G$ be the group of   permutations generated by $\sigma, \tau, \beta$.
For $x \in \omega$, we can picture $\Cj (\beta , \sss^{-x})$ as the
``shift'' of $\beta$ by $x$ columns to the left. The set $S$  is many-one
below the word problem of $G$ because
\[
x \in S \Leftrightarrow  [\Cj (\beta , \sss^{-x}), \ttt] =1,
\]
where $[u,v]=u^{-1}v^{-1}uv$ is the usual commutator of $u,v$. To see this,
first note that if $y \ne 0$, then $\Cj (\beta , \sss^{-x})(\langle
y,t\rangle)$ still lies in the $y$-th column, and thus $\Cj (\beta ,
\sss^{-x}) \tau(\langle y,t\rangle)=\tau \Cj (\beta , \sss^{-x})(\langle
y,t\rangle$), as $\tau$ is the identity on the $y$-th column. Now, if $x \in
S$, then $\beta$ is the identity on the $x$-th column and thus  $\Cj (\beta ,
\sss^{-x})$ is the identity on the $0$-th column, giving $[\Cj (\beta ,
\sss^{-x}), \ttt] =1$; if $x \notin S$, and $t$ is such that $g(t)=x$, then
$\Cj (\beta, \sss^{-x}) \tau(\langle 0 ,i\rangle) \ne \tau\Cj (\beta ,
\sss^{-x})(\langle 0,i\rangle)$, for every $i \in \{3t,3t+1,3t+2\}$.

It remains to show that  the  word problem of $G$ is truth-table below $S$.
We note that $\tau$ and $\beta$ are involutions. For any  $x \in \mathds{Z}$
we write $\beta_x = \Cj (\beta , \sss^{-x})$ and $\tau_x= \Cj(\tau,
\sss^{-x})$.  It is easy to see that  $[\beta_x, \beta_y]=1$ and $[\tau_x,
\tau_y]=1$, for all $x,y$.
Suppose now that a word $w\in F(\beta, \sigma, \tau)$ (the free group on
$\{\beta, \sigma, \tau\}$) is given; we have to decide whether $w=1$ in $G$
by  accessing the oracle $S$ in a truth-table fashion. If the exponent sum of
$\sigma$ in $w$ (i.e. the sum of all exponents of occurrences of $\sigma$ in
$w$) is nonzero then $w \neq 1$ in $G$. Otherwise, using the observations
above, we can effectively replace $w$  by an equivalent word
\begin{equation}\label{eqn:w-replace}
(\prod_{x \in L_1} \beta_x) (\prod_{u \in M_1} \tau_{u}) (\prod_{x \in L_2} \beta_x)
(\prod_{u \in M_2} \tau_{u}) \ldots   (\prod_{x \in L_k} \beta_x) (\prod_{u \in M_k} \tau_{u})
\end{equation}
where the the $L_i$ and $M_i$ are effectively
given  finite sets of distinct integers, which are nonempty except for possibly $L_1$
or $M_k$. Let $L=\bigcup_{i} L_{i}$ and $M=\bigcup_{i} M_{i}$.

Notice that a product $\beta_x \tau_u$ produces a $3$-cycle in column $-u$
precisely when $x -u\in  S^c$, otherwise  $\beta_x \tau_u$ coincides on
$C_{-u}$ with $\tau_u$. For every $x,u$ let $w(x,u)$ be the word obtained
from (\ref{eqn:w-replace}) by deleting all elements different from $\beta_x,
\tau_u$, and cancelling all occurrences of subwords $\beta_x \beta_x$ and
$\tau_u\tau_u$. Since $g$ is $1$-$1$, we have that the cycles of $\beta_x$
and $\beta_y$ are disjoint for any $x \neq y$: therefore the permutations
corresponding to $w(x,u)$ and $w$ coincide in the interval  $\{\langle -u, 3t
\rangle, \langle -u, 3t+1 \rangle, \langle -u, 3t+2 \rangle\}$ of the column
$C_{-u}$, where $g(t)=x$.

To decide whether the word in (\ref{eqn:w-replace}) is equal to $1$ in $G$,
we give a procedure to decide whether the permutation corresponding to $w$ is
the identity on each column $C_{-u}$.
 First notice that $w$ fixes all columns $C_{-u}$ with $u \notin M$
if and only, for all $x \in L$, the number of occurrences of $\beta_x$ in
(\ref{eqn:w-replace}) is even. Indeed, if $u \notin M$ and $x \in L$, then
$w(x,u)$ is a word consisting of only occurrences of $\beta_x$, which by
cancellation is either empty (if the number of occurrences is even) or equal
to $\beta_x$: if the former case happens for every $x \in L$, then every
column $C_{-u}$ with $u\notin M$ remains fixed; if $x\in L$ satisfies the
latter case, and $u \notin M$ is such that $x-u\in S^c$, then $w$ does not
fix $C_{-u}$, in which case we output $w \ne 1$ in $G$.

If we have already ascertained that all columns $C_{-u}$ remain fixed for all
$u \notin M$, then take any $u \in M$, and for every $x \in L$, perform the
following check querying the oracle:
\begin{enumerate}
\item if $x-u \notin S^c$ then on the column $C_{-u}$ the permutation
    corresponding to $w(x,u)$  coincides with the one corresponding to the word
    obtained from it by cancelling all occurrences of $\beta_x$; in this
    case, state that $C_{-u}$ is \emph{$x$-fixed} if and only if the length
    of the resulting word is even;
\item if $x-u \in S^c$ and the number of occurrences in $w(x,u)$ of the
    subword $\beta_x \tau_u$ is a not a multiple of $3$, then the
    $3$-cycles produced by $\beta_{x}$ and $\tau_{u}$ do not cancel each
    other: state in this case that $C_{-u}$ is \emph{not $x$-fixed};
    otherwise, cancel from $w(x,u)$ all occurrences of $\beta_x\tau_u$, and
    state that $C_{-u}$ is \emph{$x$-fixed} if and only if the resulting word
    is empty.
\end{enumerate}
If for all $x\in L$ we have stated that $C_{-u}$ is $x$-fixed, then we conclude that
$C_{-u}$ is fixed under the permutation corresponding to $w$.

If for all $u \in M$, we have concluded that $C_{-u}$ is fixed, then we
output that $w=1$ in $G$; otherwise we output $w \neq 1$ in $G$. An output
will be  achieved no matter what the oracle is, so the reduction is
truth-table.
\end{proof}

It would be interesting to determine the complexity of isomorphism and
embedding for f.g.\ groups of recursive permutations. Totality of a function
described  by a recursive index is already $\Pi^0_2$ complete, so  it might
be  more natural to restrict oneself to   fully primitive recursive
permutations as defined above.  It    is a $\Pi^0_1$ condition of an index
consisting of a   pair of indices  $(e,i)$ for primitive recursive functions
(one for the potential permutation, one for its potential inverse) whether it
describes such a permutation.

In both settings, isomorphism and embedding are  $\Sigma^0_3$ relations
between finitely generated groups given by finite sets of indices for the
generators.  For an example where the isomorphism relation has an
intermediate complexity, suppose the domain is $\mathds Z$, and consider the
subgroup $G$ of the group of computable permutations generated by the shift.
The problem whether a   group generated by finitely many fully primitive
recursive permutations is isomorphic to  $G$ is $\Pi^0_2$-hard. To see this,
note that   infinity of a c.e.\ set $W_e$  is $\Pi^0_2$-complete.  Build a
fully primitive recursive permutation  $p_e$ by adding a cycle of length $n$
involving large numbers when  $n $ enters $ W_e$. Then the subgroup generated
by $p_e$ is isomorphic to $G$ if and only if $p_e$  has infinite order, if and
only if  $W_e$ is infinite.

%\bibliographystyle{plain}
%\bibliography{wordproblems}

\begin{thebibliography}{10}

\bibitem{ceers}
U.~Andrews, S.~Lempp, J.~S. Miller, K.~M. Ng, L.~San~Mauro, and A.~Sorbi.
\newblock Universal computably enumerable equivalence relations.
\newblock {\em J. Symbolic Logic}, 79(1):60--88, March 2014.

\bibitem{jumpsofceers}
U.~Andrews and A.~Sorbi.
\newblock Jumps of computably enumerable equivalence relations.
\newblock in preparation, 2015.

\bibitem{Bernardi:the-relation-provable}
C.~Bernardi.
\newblock On the relation provable equivalence and on partitions in effectively
  inseparable sets.
\newblock {\em Studia Logica}, 40:29--37, 1981.

\bibitem{Bernardi-Montagna:extensional}
C.~Bernardi and F.~Montagna.
\newblock Equivalence relations induced by extensional formulae:
  Classifications by means of a new fixed point property.
\newblock {\em Fund. Math.}, 124:221--232, 1984.

\bibitem{Bernardi-Sorbi:Classifying}
C.~Bernardi and A.~Sorbi.
\newblock Classifying positive equivalence relations.
\newblock {\em J. Symbolic Logic}, 48(3):529--538, 1983.

\bibitem{Boone1966a}
W.~W. Boone.
\newblock Word problems and recursively enumerable degrees of unsolvability.
  {A} first paper on {T}hue systems.
\newblock {\em Ann. of Math.}, 83:520--571, 1966.

\bibitem{Boone1966b}
W.~W. Boone.
\newblock Word problems and recursively enumerable degrees of unsolvability.
  {A} sequel on finitely presented groups.
\newblock {\em Ann. of Math.}, 84:49--84, 1966.

\bibitem{Boone1971}
W.~W. Boone.
\newblock Word problems and recursively enumerable degrees of unsolvability.
  {A}n emendation.
\newblock {\em Ann. of Math.}, 94:389--391, 1971.

\bibitem{clapham1964finitely}
C.~R.~J. Clapham.
\newblock Finitely presented groups with word problems of arbitrary degrees of
  insolubility.
\newblock {\em Proceedings of the London Mathematical Society}, 3(4):633--676,
  1964.

\bibitem{Collins-tt-1971}
D.~J. Collins.
\newblock Truth-table degrees and the {B}oone groups.
\newblock {\em Ann. of Math.}, 94(3):392--396, 1971.

\bibitem{Coskey-Hamkins-Miller}
S.~Coskey, J.~D. Hamkins, and R.~Miller.
\newblock The hierarchy of equivalence relations on the natural numbers.
\newblock {\em Computability}, 1:15--38, 2012.

\bibitem{Ershov:positive}
Yu.~L. Ershov.
\newblock Positive equivalences.
\newblock {\em Algebra and Logic}, 10(6):378--394, 1973.

\bibitem{Ershov:NumberingsI}
Yu.~L. Ershov.
\newblock Theorie der {N}umerierungen {I}.
\newblock {\em Z. Math. Logik Grundlag. Math.}, 19:289--388, 1973.

\bibitem{Ershov:NumberingsII}
Yu.~L. Ershov.
\newblock Theorie der {N}umerierungen {II}.
\newblock {\em Z. Math. Logik Grundlag. Math.}, 19:473--584, 1975.

\bibitem{Fokina-Friedman-Nies}
E.~B. Fokina, S.~D. Friedman, and A.~Nies.
\newblock Equivalence relations that are {$\Sigma^0_3$} complete for computable
  reducibility - ({Extended Abstract}).
\newblock In {\em WoLLIC}, pages 26--33, 2012.

\bibitem{Fokina-et-al-several}
E.B. Fokina, S.D. Friedman, V.~Harizanov, J.F. Knight, C.~Mc Coy, and
  A.~Montalb\'an.
\newblock Isomorphism relations on computable structures.
\newblock {\em J. Symbolic Logic}, 77(1):122--132, 2012.

\bibitem{Fridman1962}
A.~A. Fridman.
\newblock Degrees of unsolvability of the problem of identity in finitely
  presented groups.
\newblock {\em Soviet. Math.}, 3({P}art 2):1733--1737, 1962.

\bibitem{Gao-Gerdes}
S.~Gao and P.~Gerdes.
\newblock Computably enumerable equivalence relations.
\newblock {\em Studia Logica}, 67:27--59, 2001.

\bibitem{Higman}
G.~Higman.
\newblock Subgroups of finitely presented groups.
\newblock {\em Proc. Royal Soc. London Ser A}, 262:455--475, 1961.

\bibitem{Ianovski-et-al}
E.~Ianovski, R.~Miller, K.~M. Ng, and A.~Nies.
\newblock Complexity of equivalence relations and preorders from computability
  theory.
\newblock {\em J. Symbolic Logic}, 79(3):859--881, 2014.

\bibitem{Kargapolov-Merzljakov:Book}
M.~I. Kargapolov and Ju.~I Merzljakov.
\newblock {\em Fundamentals of the Theory of Groups}.
\newblock Graduate Texts in Mathematics. Springer-Verlag, New York, Heidelberg,
  Berlin, second edition, 1979.

\bibitem{Lyndon-Schupp}
R.~C. Lyndon and P.~E. Schupp.
\newblock {\em Combinatorial Group Theory}.
\newblock Springer-Verlag, Berlin Heidelberg, 1977.

\bibitem{MillerIII-quotients}
C.~F. Miller~III.
\newblock The word problem in quotients of a group.
\newblock \url{http://www.ms.unimelb.edu.au/~cfm/papers/paperpdfs/wpqg.pdf}.
\newblock 2002.

\bibitem{MillerIII-decision}
C.~F. Miller~III.
\newblock {\em Group-Theoretic Decision Problems and their Classification}.
\newblock Number~68 in Annals of Mathematical Studies. Princeton University
  Press, Princeton, New Jersey, 1971.

\bibitem{Montagna:ufp}
F.~Montagna.
\newblock Relative precomplete numerations and arithmetic.
\newblock {\em J. Philosphical Logic}, 11:419--430, 1982.

\bibitem{morozov2000once}
A.~S. Morozov.
\newblock Once again on the {H}igman question.
\newblock {\em Algebra i logika}, 39(2):134--144, 2000.

\bibitem{Rabin}
M.~O. Rabin.
\newblock Recursive unsolvability of group theoretic problems.
\newblock {\em Ann. of Math.}, 67:172--174, 1958.

\bibitem{Soare:Book}
R.~I. Soare.
\newblock {\em Recursively Enumerable Sets and Degrees}.
\newblock Perspectives in Mathematical Logic, Omega Series. Springer-Verlag,
  Heidelberg, 1987.

\bibitem{Visser:Numerations}
A.~Visser.
\newblock Numerations, $\lambda$-calculus $\&$ arithmetic.
\newblock In J.~P. Seldin and J.~R. Hindley, editors, {\em To H.~B.~Curry:
  Essays on Combinatory Logic, Lambda Calculus and Formalism}, pages 259--284.
  Academic Press, London, 1980.

\bibitem{Ziegler1976}
M.~Ziegler.
\newblock {Ein rekursiv aufz\"ahlbarer btt-Grad, der nicht zum Wortproblem
  einer Gruppe geh\"ort}.
\newblock {\em Z. Math. Logik Grundlag. Math.}, 22:165--168, 1976.

\end{thebibliography}

\end{document}